\def\titlerunning#1{\gdef\titrun{#1}}
\def\author#1{\gdef\autrun{\def\and{\unskip, }#1}\gdef\@author{#1}}
\def\address#1{{\def\and{\\\hspace*{18pt}}\renewcommand{\thefootnote}{}%
\footnote {#1}}%
\markboth{\autrun}{\titrun}}
\def\email#1{e-mail: #1}
\def\subjclass#1{{\renewcommand{\thefootnote}{}%
\footnote{\emph{Mathematics Subject Classification (2010):} #1}}}
\def\keywords#1{\par\medskip
\noindent\textbf{Keywords.} #1}
\newtheorem{thm}{Theorem}[section]
\newtheorem{cor}[thm]{Corollary}
\newtheorem{lem}[thm]{Lemma}
\theoremstyle{definition}
\newtheorem{defin}[thm]{Definition}
\newtheorem{rem}[thm]{Remark}
\numberwithin{equation}{section}
\def\R {\Bbb R}
\def\N {{\Bbb N}}
\newtheorem{pro}[thm]{Proposition}
\newtheorem{ques}[thm]{Question}
\begin{document}


\baselineskip=17pt


\titlerunning{Polynomials with bounded integer coefficients}

\title{On the topology of polynomials with bounded integer coefficients}

\author{De-Jun Feng
}

\date{}

\maketitle

\address{D. J. Feng: Department of Mathematics,
The Chinese University of Hong Kong,
Shatin,  Hong Kong; 
 \email{djfeng@math.cuhk.edu.hk}}

\subjclass{Primary 11J17; Secondary 11K16, 28A78}


\begin{abstract}

For a  real number $q>1$ and a positive integer $m$, let
 $$
 Y_m(q):=\left\{\sum_{i=0}^n\epsilon_i q^i:\; \epsilon_i\in \{0, \pm 1,\ldots, \pm m\},\; n=0, 1,\ldots \right\}.
 $$
In this paper, we show that $Y_m(q)$ is dense in $\R$ if and only if $q<m+1$ and $q$ is not a Pisot number. This completes several previous results and answers an open question raised by
Erd\H{o}s, Jo\'{o} and  Komornik  \cite{EJK98}.

\keywords{Pisot numbers,  iteration function systems}
\end{abstract}

\section{Introduction}

 For a  real number $q>1$ and a positive integer $m$, let
 $$
 Y_m(q):=\left\{\sum_{i=0}^n\epsilon_i q^i:\; \epsilon_i\in \{0, \pm 1,\ldots, \pm m\},\; n=0, 1,\ldots \right\}.
 $$
In this paper, we consider the following old question regarding of the topological structure of $Y_m(q)$:

\begin{ques}
\label{ques-1.1}
 For which pair $(q, m)$ is the set  $Y_m(q)$  dense in $\R$?
\end{ques}

  It is well known that $Y_m(q)$ is not dense in $\R$ in the following two cases:  $q$ is a Pisot number (Garsia \cite{Gar62}), or  $q\geq m+1$ (Erd\H{o}s and Komornik \cite{EK98}); Recall that a {\it Pisot number} is  an algebraic integer $>1 $ all of whose conjugates have modulus $<1$ (cf. \cite{Sal63}).  For the reader's convenience, we include a brief proof. First assume that $q$ is  a Pisot number. Denote by $q_1,\ldots, q_d$ the algebraic conjugates of $q$. Then $\rho:=\max_{1\leq j\leq d}|q_j|<1$.  Let $P(x)=\sum_{i=0}^n\epsilon_i x^i$ be a polynomial with coefficients in $\{0, \pm 1,\ldots, \pm m\}$. Suppose that $P(q)\neq 0$. Then $P(q_j)\neq 0$ for $1\leq j\leq d$.
Hence $P(q)\prod_{j=1}^dP(q_j)$ is a non-zero integer. Therefore $$
|P(q)|\geq \prod_{j=1}^d \frac{1}{|P(q_j)|}\geq \left(\frac{1}{\sum_{i=0}^{n}m\rho^i}\right)^d>m^{-d}(1-\rho)^d.
$$   
It follows that  $Y_m(q)$ is not dense in $\R$ with $0$ being an isolated point. The same argument also shows that $0$ is an isolated point of $Y_{2m}(q)=Y_m(q)-Y_m(q)$, therefore  $Y_m(q)$ is uniformly discrete  in $\R$. Next assume that $q\geq m+1$. Then for any $n\in \N$,
$$
q^n-\sum_{i=0}^{n-1}mq^i=\frac{q^n(q-1-m)+m}{q-1}\geq \frac{(q-1-m)+m}{q-1}=1. 
$$ 
It follows that $|P(q)|\geq 1$ for any polynomial $P$ with degree $\geq 1$ and coefficients in $\{0, \pm 1,\ldots, \pm m\}$. Hence $Y_m(q)\cap (-1, 1)=\{0\}$, as a consequence, $Y_m(q)$ is not dense in $\R$.

In this paper, by proving the reverse direction we obtain the following theorem, which provides a complete answer to Question  \ref{ques-1.1}.

\begin{thm}
\label{thm-1.1}
$Y_m(q)$ is dense in $\R$ if and only if $q< m+1$ and $q$ is not a Pisot number.
\end{thm}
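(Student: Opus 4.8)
\emph{Necessity.} Write $\mathcal P_m$ for the polynomials with coefficients in $\{0,\pm1,\dots,\pm m\}$, so that $Y_m(q)=\{P(q):P\in\mathcal P_m\}$, and assume $Y_m(q)$ is dense. If $q\ge m+1$ then $q-1\ge m$, so for a nonzero $P=\sum_{i=0}^n\epsilon_ix^i\in\mathcal P_m$, taking $j$ maximal with $\epsilon_j\ne0$ gives $|P(q)|\ge|\epsilon_j|q^j-m\frac{q^j-1}{q-1}\ge(|\epsilon_j|-1)q^j+1\ge1$; hence $Y_m(q)\cap(0,1)=\varnothing$, a contradiction, so $q<m+1$. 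If $q$ were a Pisot number with minimal polynomial of degree $d$ and conjugates $q=q_1,\dots,q_d$ (so $|q_j|<1$ for $j\ge2$), then for $P\in\mathcal P_m$ with $P(q)\ne0$ the rational integer $\prod_{j=1}^dP(q_j)$ is nonzero while $|P(q_j)|\le m/(1-|q_j|)$ for $j\ge2$, whence $|P(q)|\ge\prod_{j=2}^d(1-|q_j|)/m>0$ uniformly in $P$; so $Y_m(q)$ would be uniformly discrete, again contradicting density. Thus $q$ is not Pisot.

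\emph{Sufficiency: reduction to small elements.} Now suppose $1<q<m+1$ and $q$ is not Pisot. I would first reduce the claim to showing $\ell:=\inf\{|P(q)|:P\in\mathcal P_m,\ P(q)\ne0\}=0$. Since $q\le m+1$, the $m{+}1$ contractions $x\mapsto(x+c)/q$ ($c=0,\dots,m$) have images covering $[0,\tfrac m{q-1}]$, so every $t\in[0,\tfrac m{q-1}]$ has an expansion $t=\sum_{k\ge1}\delta_kq^{-k}$ with $\delta_k\in\{0,\dots,m\}$; rescaling by powers of $q$ and using $Y_m(q)=-Y_m(q)$, one gets that $\bigcup_{k\ge0}q^{-k}Y_m(q)$ is dense in $\R$ and that $G:=\overline{Y_m(q)}$ is relatively dense. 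Because $qG+c\subseteq G$ for every $c\in\{0,\pm1,\dots,\pm m\}$ and $G=-G$, a covering/iteration argument — the intervals $q[a,b]+c$ merge as soon as $q(b-a)\ge1$, precisely because $q<m+1$ makes the digit set redundant — should turn one sufficiently small nonzero element of $Y_m(q)$, together with relative density and $q$-scaling, into a nonempty open interval of $G$ containing $0$, after which $G\supseteq\bigcup_kq^k(\text{that interval})=\R$. (This reduction, essentially the known dichotomy ``$Y_m(q)$ is dense or uniformly discrete'', is itself a genuine step, but I expect it to be the softer half of the argument.)

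\emph{Producing small elements.} To see $\ell=0$, observe that among the $(m+1)^n$ polynomials with coefficients in $\{0,1,\dots,m\}$ and degree $<n$, the values at $q$ lie in an interval of length $<mq^n/(q-1)$, so two distinct ones $P_1,P_2$ satisfy $|P_1(q)-P_2(q)|<\frac{2m}{q-1}(q/(m+1))^n\to0$, using $q<m+1$; set $P:=P_1-P_2\in\mathcal P_m$. If $q$ is transcendental then $P(q)\ne0$ automatically, so $\ell=0$. If $q$ is algebraic but not an algebraic integer, its minimal polynomial $\mu$ is non-monic with the root $q$ of modulus $>1$, and the number of $\{0,\dots,m\}$-polynomials of degree $<n$ lying in a fixed residue class modulo $\mu$ grows at most like $\theta^n$ with $\theta<q$; after discarding the comparatively few pairs with $\mu\mid P$, the pigeonhole still yields $P\in\mathcal P_m$ with $0<|P(q)|\to0$.

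\emph{The main obstacle.} The remaining case, $q$ an algebraic integer that is not Pisot, is the heart of the matter: then $q$ has a Galois conjugate of modulus $\ge1$ (a Salem number being the typical example), the crude counting no longer controls the polynomials divisible by $\mu$, and one must instead use all conjugate embeddings of $\Bbb Z[q]$ at once. Concretely, I would combine $\prod_jP(q_j)\in\Bbb Z\setminus\{0\}$ (valid when $P(q)\ne0$) with the automatic bound $|P(q_j)|\le m/(1-|q_j|)$ at conjugates inside the unit disc, and a Dirichlet/geometry-of-numbers argument controlling $|P(q_j)|$ at the conjugates with $|q_j|\ge1$ while keeping every coefficient of $P$ inside $\{-m,\dots,m\}$ (once more exploiting $q<m+1$), to manufacture $P\in\mathcal P_m$ with $0<|P(q)|$ arbitrarily small. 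Making this last step work — balancing smallness of $P(q)$ against the integrality of the norm and the boundedness of the coefficients — is where essentially all of the difficulty lies.
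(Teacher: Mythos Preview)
Your necessity argument and your reduction to $\ell=0$ (this is Drobot's dichotomy, already in the literature) are fine. The gap is exactly where you locate it: the case where $q$ is a non-Pisot algebraic integer. The Dirichlet/geometry-of-numbers scheme you sketch---pigeonhole among $\{0,\dots,m\}$-polynomials while simultaneously controlling $|P(q_j)|$ at the conjugates with $|q_j|\ge 1$, then invoking $\prod_jP(q_j)\in\Bbb Z\setminus\{0\}$---is precisely the line of attack pursued by Bugeaud and by Erd\"os--Komornik, and it is known to deliver only partial results: it gives $\ell_m(q)=0$ for $m\ge\lceil q-q^{-1}\rceil+\lceil q-1\rceil$, not for every $m>q-1$. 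The obstacle is real: when $q$ has several conjugates of modulus $\ge1$ (Salem numbers are the borderline, but Perron-non-Pisot numbers with many large conjugates are worse), the box in $\R^d$ you need in Minkowski's theorem has volume governed by the product of those moduli, and for small $m$ there simply are not enough lattice points with coefficients in $\{-m,\dots,m\}$ to force a hit. Your sentence ``making this last step work \dots\ is where essentially all of the difficulty lies'' is accurate, but the proposal does not contain an idea for overcoming it.

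The paper's route is structurally different and sidesteps this number theory entirely. It does \emph{not} try to produce small nonzero elements of $Y_m(q)$ directly. Instead it imports the result of Akiyama--Komornik that, for $q<m+1$ non-Pisot, $Y_m(q)$ has \emph{some} finite accumulation point (not necessarily $0$), and then proves the purely dynamical statement (Theorem~1.3, via Theorem~1.9 on iterated function systems) that for $q\le m+1$ the existence of any finite accumulation point forces $0$ to be one. The mechanism is a weak-separation-implies-finite-type argument: if $0$ were isolated in $Y_m(q)$, one bounds the maximal number of level-$n$ cylinder endpoints in any $\rho^n$-interval, finds a window where this maximum is attained, and shows that every ``close pair'' $\phi_I(0),\phi_J(0)$ can be extended to an exact coincidence at a bounded later level, forcing $Y_m(q)\cap[-1,1]$ to be finite and hence $Y_m(q)$ to be uniformly discrete. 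So the hard arithmetic input is borrowed from \cite{AkKo11}; the paper's own contribution is the transfer from ``some accumulation point'' to ``$0$ is an accumulation point'', which your proposal treats as the easy direction rather than the crux.
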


 We remark that Question \ref{ques-1.1} is closely related to a  project proposed by Erd\H{o}s, Jo\'{o} and Komornik in last 90's.  For  $q>1$ and  $m\in \N$, let
$$X_m(q)=\left\{\sum_{i=0}^n\epsilon_i q^i:\; \epsilon_i\in \{0, 1,\ldots,m\},\; n=0, 1,\ldots \right\}.$$
 Since $X_m(q)$ is discrete, we may arrange the points of $X_m(q)$ into an increasing sequence:
$$0=x_0(q, m)<x_1(q,m)<x_2(q,m)<\cdots.$$
Denote 
\begin{equation*}
\displaystyle\ell_m(q)=\liminf_{n\to \infty}(x_{n+1}(q,m)-x_n(q,m)), \quad L_m(q)=\limsup_{n\to \infty} (x_{n+1}(q,m)-x_n(q,m)).\end{equation*}
Originated from the study of expansions in non-integer bases,  Erd\H{o}s, Jo\'{o} and Komornik  \cite{EJK90, EJK98, EK98} proposed to characterize all the pairs  $(q,m)$ so that $\ell_m(q)$ and $L_m(q)$ vanish. By definition, $\ell_m(q)=0$ is equivalent to that $0$ is an accumulation point of $Y_m(q)$. However, it was proved by Drobot \cite{Dro73} (see also \cite{DrMc80})  that  $Y_m(q)$ is dense in $\R$ if and only if $0$ is an accumulation point of  $Y_m(q)$.  Hence  $\ell_m(q)=0$ if and only if $Y_m(q)$ is dense in $\R$.    In \cite{EJK98}, Erd\H{o}s, Jo\'{o} and Komornik raised the open question whether
   $\ell_1(q)=0$ for any non-Pisot number $q\in (1,2)$. This question was also formulated  in \cite{SiSo11, AkKo11}.   As a direct corollary of Theorem \ref{thm-1.1} and Drobot's result, we can provide a confirmative answer to this question.

\begin{cor}
\label{cor-1.2}
$\ell_m(q)=0$ if and only if $q< m+1$ and $q$ is not a Pisot number.
\end{cor}

In the literature there are  some  partial results on  Question \ref{ques-1.1} and the project of  Erd\H{o}s et al. It was shown in  \cite{Dro73, DrMc80} that if $q\in (1,m+1)$ does not satisfy an algebraic equation with coefficients $0, \pm 1,\ldots, \pm m$, then $\ell_m(q)=0$.
In \cite{Bug96} Bugeaud showed that if $q$ is not a Pisot number, then there exists an integer $m$ so that $\ell_m(q)=0$. The approach of Bugeaud did not provide any estimate of $m$. A substantial progress was made later by Erd\H{o}s and Komornik \cite{EK98}, who proved that   $\ell_m(q)=0$ if $q$ is not a Pisot number and $m\geq \lceil q-q^{-1} \rceil+\lceil q-1\rceil$, where $\lceil x \rceil$ denotes the smallest integer $\geq x$.  Recently Akiyama and Komornik \cite{AkKo11}   showed that $\ell_1(q)=0$ if $q\in (1,  \sqrt{2}]$ is not a Pisot number smaller than the golden ratio $(1+\sqrt{5})/2$.
In \cite{SiSo11}, Sidorov and Solomyak proved that if $q\in (1,m+1)$ and $q$ is not a Perron number, then  $\ell_m(q)=0$.  Recall that an algebraic integer $q>1$ is called a {\it Perron number} if each of its conjugates is less than $q$ in modulus.

As for the value of $L_m(q)$, Erd\H{o}s and Komornik \cite{EK98} proved that   $L_m(q)>0$ if $q$ is a Pisot number or $q\geq (m + \sqrt{m^2+4})/2$.  It remains an open problem whether $L_m(q)=0$ for all other pairs $(q,m)$ with $q>1$ and $m\in \N$. In \cite{Kom02}, Komornik  conjectured that this is true in the case when $m=1$, i.e., $L_1(q)=0$ for any non-Pisot number smaller than the golden ratio.  Some partial results  were obtained by Erd\H{o}s-Komornik  and Akiyama-Komornik:  $L_m(q)=0$ if $q$ is non-Pisot and $m\geq \lceil q-q^{-1} \rceil+2\lceil q-1\rceil$ (\cite{EK98}); furthermore, $L_1(q)=0$ if $1<q\leq \sqrt[3]{2}\approx 1.2599$ (\cite{EK98,AkKo11}). Here the second part was only proved in \cite{EK98} for all $1<q\leq \sqrt[4]{2}\approx1.1892$ with the possible exception of the square root of the second Pisot number.

By directly applying Corollary \ref{cor-1.2} and \cite[Lemma 2.5]{AkKo11} (which says that $\ell_m(q^2)=0$ implies \footnote
{This implication was first proved in \cite[Theorem 5]{EJK98}  in the case $m=1$. It extends to $m>1$ directly. } $L_m(q)=0$), we have the following theorem which improves the results in \cite{EK98,AkKo11}.
\begin{thm}
\label{thm-1.3'}
If $1<q< \sqrt{m+1}$ and $q^2$ is not a Pisot number, then $L_m(q)=0$. In particular, if $q\in (1, \sqrt{2})$ and $q^2$ is not a Pisot number, then $L_1(q)=0$.
\end{thm}

  Let us mention some other important results related to Question \ref{ques-1.1}. In \cite{EK98}, Erd\H{o}s and Komornik  showed that  if $q>1$ is not a Pisot number and $m\geq q-q^{-1}$, then $Y_m(q)$ has a finite accumulation point. Very recently,  Akiyama and Komornik \cite{AkKo11} characterized all pairs $(q,m)$ so that $Y_m(q)$ has a finite accumulation point, completing the previous results of Erd\H{o}s and Komornik \cite{EK98} and Zaimi \cite{Zai07} on this topic.

\begin{thm}[Akiyama and Komornik \cite{AkKo11}] $Y_m(q)$ has a finite accumulation point in $\R$ if and only if $q<m+1$ and $q$ is not a Pisot number.
 \label{thm-1.2}
 \end{thm}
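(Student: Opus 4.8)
The plan is to establish the two implications separately.

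\emph{Necessity} (a finite accumulation point forces $q<m+1$ and $q$ not Pisot). This part is short, using only results already quoted. If $q$ were a Pisot number, then $Y_m(q)$ would be uniformly discrete by Garsia's theorem \cite{Gar62}, hence would have no accumulation point at all; so $q$ is not Pisot. For $q<m+1$ I would argue by contradiction. If $q>m+1$ then $m<q-1$, and for every nonzero $P(x)=\sum_{i=0}^{n}\epsilon_ix^{i}$ with $\epsilon_n\neq0$ and all $|\epsilon_i|\le m$,
$$|P(q)|\ \ge\ q^{n}-m\,\frac{q^{n}-1}{q-1}\ \ge\ \frac{q-1-m}{q-1}\,q^{n},$$
so every element of $Y_m(q)$ lying in a fixed bounded set is realised by a polynomial of bounded degree; hence $Y_m(q)$ is locally finite and has no accumulation point. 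If $q=m+1$ then $q\in\Bbb Z$ and $Y_m(q)=\Bbb Z$, again locally finite. In every case there is no finite accumulation point, a contradiction, so $q<m+1$.

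\emph{Sufficiency} ($q<m+1$ and $q$ not Pisot imply a finite accumulation point). First I would clear away the easy cases with the quoted theorems. If $q$ is not a Perron number, then $Y_m(q)$ is dense by the theorem of Sidorov and Solomyak \cite{SiSo11}. If $q$ is a Perron number satisfying no algebraic equation with coefficients in $\{0,\pm1,\ldots,\pm m\}$, then $Y_m(q)$ is dense by Drobot's theorem \cite{Dro73}. In either case a dense subset of $\R$ certainly has finite accumulation points. This leaves the core case: $q$ is a Perron number that is not Pisot and that satisfies some relation with coefficients in $\{0,\pm1,\ldots,\pm m\}$; in particular $q$ is an algebraic integer possessing a conjugate $\alpha$ with $1\le|\alpha|<q$.

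For this core case the plan is a pigeonhole/counting argument on polynomial values. Fix a large $N$ and consider the $(m+1)^{N}$ polynomials $P$ of degree $<N$ with coefficients in $\{0,1,\ldots,m\}$; all the values $P(q)$ lie in an interval of length $O(q^{N})$. If one can show that the number $V_N$ of \emph{distinct} such values satisfies $V_N/q^{N}\to\infty$ as $N\to\infty$, then for every $\varepsilon>0$ two of these values are distinct and within $\varepsilon$ of each other, so their difference is a nonzero element of $Y_m(q)$ (its coefficients lie in $\{-m,\ldots,m\}$) of modulus $<\varepsilon$; hence $0$ is an accumulation point of $Y_m(q)$, which by Drobot's theorem even makes $Y_m(q)$ dense. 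Thus everything reduces to the lower bound $V_N\gg q^{N}$, equivalently to an upper bound on the number of ``collisions'', i.e.\ on the polynomials $D$ of degree $<N$ with coefficients in $\{-m,\ldots,m\}$ satisfying $D(q)=0$. Writing $D=\mu_q S$ with $\mu_q$ the minimal polynomial of $q$, the idea would be to show that the smallness condition $\|\mu_q S\|_\infty\le m$ forces $S$ to be extremely rigid, exploiting that $q$ is Perron (every root of $\mu_q$ has modulus $\le q$) together with the conjugate $\alpha$ with $|\alpha|\ge1$ to pin down the high-order coefficients of $S$. This collision estimate is the main obstacle; its most delicate sub-case is when $q$ is a Salem number, so that $\alpha$ lies on the unit circle and there is no room to spare, and settling it carefully builds on the work of Erd\"{o}s and Komornik \cite{EK98} and Zaimi \cite{Zai07}.
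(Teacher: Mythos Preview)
This theorem is not proved in the present paper at all: it is quoted as a result of Akiyama and Komornik \cite{AkKo11}, and the paper's own contribution is Theorem~\ref{thm-1.3}, which together with Theorem~\ref{thm-1.2} yields Theorem~\ref{thm-1.1}. So there is no proof here to compare your proposal against.

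On the merits of your proposal itself: the necessity direction is fine and standard. In the sufficiency direction, however, your pigeonhole argument aims to show that $0$ is an accumulation point of $Y_m(q)$, which is strictly stronger than the conclusion of Theorem~\ref{thm-1.2} (existence of \emph{some} finite accumulation point) --- it is in fact Theorem~\ref{thm-1.1}, the main result of the paper. The point of the paper is precisely that this stronger statement does \emph{not} follow by a direct counting argument of the type you sketch; instead one first obtains the weaker Theorem~\ref{thm-1.2} from \cite{AkKo11}, and then upgrades ``some finite accumulation point'' to ``$0$ is an accumulation point'' via the IFS argument of Theorem~\ref{thm-1.3} and Theorem~\ref{thm-2.1}.

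The gap you yourself flag --- the collision estimate in the Salem case --- is genuine and is exactly where your direct approach stalls. When $q$ is Salem, all conjugates lie on or inside the unit circle, and the constraint $\|\mu_q S\|_\infty\le m$ does not force enough rigidity on $S$ to make $V_N/q^N\to\infty$ by the method you outline; the works \cite{EK98,Zai07} you cite do not close this gap either. Indeed, \cite{AkKo11} proves only that $Y_m(q)$ has a finite accumulation point in this range, not that $0$ is one, and the passage from the former to the latter is the whole content of the present paper.
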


 In this paper, we shall prove the following result.
 \begin{thm}
 \label{thm-1.3} Assume that $1<q\leq m+1$. Then $Y_m(q)$ has no  finite accumulation points in $\R$  if and only if  $0$ is not an accumulation point of $Y_m(q)$.
 \end{thm}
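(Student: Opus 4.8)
One direction is trivial: if $Y_m(q)$ has no finite accumulation point at all, then $0$ in particular is not one (and, since $0\in Y_m(q)$, an accumulation point at $0$ would be a finite one). So the content is the converse, which I would prove by contraposition: assuming $Y:=Y_m(q)$ has a finite accumulation point $a$ and that $1<q\le m+1$, show that $0$ is an accumulation point of $Y$. The basic tool is the self-affine identity $Y=D+qY$ with $D=\{0,\pm1,\dots,\pm m\}$, which gives $qY\subseteq Y$ and, on the derived set, $Y'=D+qY'$; thus $Y'$ is closed, symmetric, forward invariant ($qY'+D\subseteq Y'$) and backward recurrent (each point of $Y'$ has a preimage in $Y'$ under some $x\mapsto(x-d)/q$). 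I would also note $Y_{2m}(q)=Y-Y$, so uniform discreteness of $Y$ amounts to $0$ not accumulating in $Y_{2m}(q)$.

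First I would reduce the range of $q$. If $q=m+1$ then $Y=\mathbb Z$ is uniformly discrete, so there is nothing to prove; assume $1<q<m+1$. If $q$ satisfied no polynomial equation with coefficients in $D$, then $Y$ would be dense by Drobot's theorem, so $0\in Y'$ and we are done; hence $q$ is algebraic and a root of such a polynomial. Since a finite accumulation point rules out uniform discreteness of $Y$, Garsia's theorem forces $q$ to be non-Pisot: $q$ is not an algebraic integer, or it has a Galois conjugate of modulus $\ge1$.

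Next I would exploit the two invariances to find accumulation points near $0$. Peeling off lowest digits along a sequence of distinct elements of $Y$ converging to $a$ (passing to subsequences to fix the peeled digit) produces $a_0=a,a_1,a_2,\dots\in Y'$ with $a_{j+1}=(a_j-d_j)/q$ and $|a_{j+1}|\le(|a_j|+m)/q$; hence $|a_j|\le R:=m/(q-1)$ for large $j$, and a limit point of $(a_j)$ gives $c\in Y'$ with $|c|\le R$ (and in fact $<R$ when $q<m+1$). Because $q\le m+1$ the digit system is complete, $\{\sum_{i\ge1}\eta_iq^{-i}:\eta_i\in\{0,\dots,m\}\}=[0,R]$, so the finite sums $S_n:=\sum_{i=0}^{n-1}q^iD$ are $1$-dense in $[-R(q^n-1),R(q^n-1)]$; combined with $q^nc+S_n\subseteq Y'$ this makes $Y'$ $1$-dense in all of $\mathbb R$. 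Forward invariance moreover lets one push a point of $Y'$ toward $0$ as far as the integer grid permits: from any $c'\in Y'$ with $|c'|\le\tfrac12$, subtracting the nearest integer from $qc'$ yields a point of $Y'$ of absolute value $\mathrm{dist}(qc',\mathbb Z)\le\tfrac12$. If $0$ were not an accumulation point, iterating would trap an orbit of the folded multiplication map $x\mapsto\mathrm{dist}(qx,\mathbb Z)$ inside an interval $[\delta,\tfrac12]$ with $\delta>0$.

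The decisive step — which I expect to be the main obstacle — is to show that no such trapped orbit exists once $q$ is not Pisot; equivalently, that $Y$ then has arbitrarily small nonzero elements. For integer or Pisot $q$ such orbits do occur (consistently with $Y$ being discrete there), so the non-Pisot hypothesis, and the slack $m+1-q>0$, are both essential. Concretely I would turn the accumulation point into a sequence of nonzero polynomials $P_k$ with coefficients in $\{-2m,\dots,2m\}$, $\deg P_k\to\infty$ and $P_k(q)\to0$; pass to the conjugates of $q$, using one of modulus $\ge1$ to control the growth of $P_k$ there while the norm of a nonzero $P_k(q)$ remains a nonzero integer (up to a controlled power of the leading coefficient); and then use the room $m+1-q>0$ to redistribute the coefficients from $\{-2m,\dots,2m\}$ back into $\{-m,\dots,m\}$ without spoiling the smallness, producing nonzero elements of $Y_m(q)$ arbitrarily close to $0$ and contradicting the assumption. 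It is this passage from a ``$Y_{2m}$-relation'' near $0$ to a ``$Y_m$-relation'' near $0$ where the hypothesis $q\le m+1$, rather than the weaker $q\le 2m+1$, is indispensable.
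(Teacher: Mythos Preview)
Your proposal has a genuine gap at exactly the point you flag as the ``decisive step.'' After reducing to $q$ non-Pisot (via Garsia) and obtaining nonzero polynomials $P_k$ with coefficients in $\{-2m,\dots,2m\}$ and $P_k(q)\to 0$, you need to pass to polynomials with coefficients in $\{-m,\dots,m\}$. But this implication---``$0$ is an accumulation point of $Y_{2m}(q)$ and $q<m+1$ implies $0$ is an accumulation point of $Y_m(q)$''---is precisely equivalent to the theorem you are trying to prove, and in the paper's logical structure it is also equivalent (together with Akiyama--Komornik) to the Erd\H{o}s--Jo\'o--Komornik question itself. Your sketch for it (``use a conjugate of modulus $\ge 1$ to control growth, then redistribute coefficients using the slack $m+1-q>0$'') is not a proof: there is no carry mechanism in a non-integer base $q$ that lets you rewrite a $\{-2m,\dots,2m\}$-polynomial as a $\{-m,\dots,m\}$-polynomial with comparable value at $q$, and the conjugate argument you outline is essentially the Erd\H{o}s--Komornik approach, which is known to succeed only for $m$ roughly of size $2q$, not for all $m>q-1$. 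The trapped-orbit reformulation is correct but does not help, since ruling out such orbits for non-Pisot $q$ is again the same open problem.

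The paper's argument is entirely different and avoids number theory altogether. It recasts both conditions in terms of the IFS $\Phi=\{q^{-1}x+i(1-q^{-1})/m\}_{i=0}^m$: ``$0$ not an accumulation point'' becomes the weak separation condition, and ``no finite accumulation point'' becomes the finite type condition (Lemma~\ref{lem-2.1}). The implication weak separation $\Rightarrow$ finite type is then proved directly (Theorem~\ref{thm-2.1}) by a pigeonhole/maximality argument: weak separation gives a uniform bound $\ell$ on the number of level-$n$ cylinder left endpoints in any interval of length $\rho^n$; one fixes a window where this bound is attained and shows that any two cylinders overlapping by more than a fixed fraction $\delta$ must, after appending a bounded word, share a left endpoint, forcing the normalized difference into a fixed finite set. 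The hypothesis $q\le m+1$ enters only as the covering condition $[0,1]=\bigcup_i\phi_i([0,1])$, which guarantees the existence of the needed comparison cylinder (Lemma~\ref{lem-2.2}). No appeal to Pisot numbers, Galois conjugates, or Akiyama--Komornik is made; those are used only afterwards to deduce Theorem~\ref{thm-1.1} from Theorem~\ref{thm-1.3}.
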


Theorem \ref{thm-1.3}  was conjectured at the end of \cite{AkKo11},  where the authors
observed that, combined with Theorem \ref{thm-1.2}, this would yield that $0$ is an accumulation point of $Y_m(q)$ (equivalently, $Y_m(q)$ is dense in $\R$) if and only if $q< m+1$ and $q$ is not a Pisot number. Hence Theorem \ref{thm-1.1} follows from Theorem \ref{thm-1.3} and Theorem \ref{thm-1.2}.

We remark that the separation property of $Y_m(q)$ were also  considered by Lau \cite{Lau93} in his study of Bernoulli convolutions (see \cite{PSS} for a survey about Bernoulli convolutions). Following Lau \cite{Lau93}, we call  $q\in (1,2)$  a {\it F-number} if $$Y_1(q)\cap \Big[-\frac{1}{q-1}, \frac{1}{q-1}\Big] \; \mbox{ is a finite set. } $$ Clearly, each Pisot number in $(1,2)$ is a F-number.   Lau raised a question in \cite{Lau93} whether or not there exists a F-number which is non-Pisot. As a corollary of Theorem \ref{thm-1.1} (it also follows from Theorem \ref{thm-1.2} together with  Remark \ref{re-1.1} and Lemma \ref{lem-2.1}), we have the following answer to Lau's question.
\begin{cor}
Every F-number is a Pisot number.
\end{cor}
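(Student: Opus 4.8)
The plan is to deduce the corollary directly from Theorem \ref{thm-1.1}. A F-number is, by definition, a real number $q\in(1,2)$ for which $Y_1(q)\cap[-\frac{1}{q-1},\frac{1}{q-1}]$ is a finite set; for a Pisot number $q\in(1,2)$ this holds because $Y_1(q)$ is even uniformly discrete (Garsia's theorem \cite{Gar62}, recalled in the introduction), so every Pisot number in $(1,2)$ is a F-number and the real content of the corollary is the reverse implication. I would prove it in contrapositive form: if $q\in(1,2)$ is \emph{not} a Pisot number, then $q$ is not a F-number.

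So let $q\in(1,2)$ be a non-Pisot number and set $m=1$. Then $1<q<2=m+1$, so the hypotheses of Theorem \ref{thm-1.1} are met and $Y_1(q)$ is dense in $\R$. Since $q>1$, the quantity $\frac{1}{q-1}$ is finite and positive, hence $J:=[-\frac{1}{q-1},\frac{1}{q-1}]$ is a nondegenerate closed interval, and any subset of $\R$ that is dense in $\R$ meets such an interval in infinitely many points. Therefore $Y_1(q)\cap J$ is infinite and $q$ is not a F-number. This proves the corollary.

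An alternative route, indicated in the text, avoids the full strength of Theorem \ref{thm-1.1} and uses only Theorem \ref{thm-1.2}: if $q\in(1,2)$ is not Pisot then $Y_1(q)$ has a finite accumulation point by Theorem \ref{thm-1.2} (with $m=1$), and the auxiliary facts quoted just above (Remark 1.1 and Lemma 2.1, combined with Theorem \ref{thm-1.3}) upgrade this to $0$ being an accumulation point of $Y_1(q)$; since $0$ is an interior point of $J$, again $Y_1(q)\cap J$ is infinite. In either presentation there is essentially no obstacle at the level of the corollary itself: all of the difficulty has already been expended on Theorem \ref{thm-1.1}, and underneath it on the new Theorem \ref{thm-1.3}. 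The only points calling for any comment are entirely routine --- that $J$ is a genuine nondegenerate interval because $q>1$, and that a set dense in $\R$ has infinitely many points in $J$.
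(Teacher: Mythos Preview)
Your main argument is correct and is exactly the deduction the paper has in mind: the corollary is stated immediately after Theorem~\ref{thm-1.1} as its direct consequence, and your contrapositive (non-Pisot $q\in(1,2)$ $\Rightarrow$ $Y_1(q)$ dense $\Rightarrow$ $Y_1(q)\cap J$ infinite) is precisely that.

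One small correction regarding your alternative route: the paper's parenthetical alternative (Theorem~\ref{thm-1.2} together with Remark~\ref{re-1.1} and Lemma~\ref{lem-2.1}) does \emph{not} pass through Theorem~\ref{thm-1.3} or through the statement that $0$ is an accumulation point. Instead it argues directly: by Remark~\ref{re-1.1}, $q$ is a F-number iff the IFS $\{q^{-1}x,\,q^{-1}x+(1-q^{-1})\}$ is of finite type; by Lemma~\ref{lem-2.1} this is equivalent to $Y_1(q)$ having no finite accumulation point (the set $Y$ of Lemma~\ref{lem-2.1} equals $(q-1)Y_1(q)$ here); and by Theorem~\ref{thm-1.2} this forces $q$ to be Pisot. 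Your version re-imports Theorem~\ref{thm-1.3}, which is precisely the new ingredient behind Theorem~\ref{thm-1.1}, so it does not actually avoid the full strength of Theorem~\ref{thm-1.1} as you claim. This does not affect correctness, only the point of the remark.
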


As a closely related topic, for $q\in (1,2)$, the topological structure of the following set
$$
A(q)=\left\{\sum_{i=0}^n\epsilon_i q^i:\; \epsilon_i\in \{-1, 1\},\; n=0, 1,\ldots \right\}
$$
has been studied in the literature \cite{PeSo00, BoHa02, Sta10, AkKo11}. It was proved  that if $1<q\leq \sqrt{2}$ is not a Pisot number, then  $A(q)$ is dense in $\R$  \cite{AkKo11};  moreover, for almost all $q\in (\sqrt{2}, 2)$, $A(q)$ is dense in $\R$ \cite{PeSo00}.  Meanwhile,  there exist non-Pisot numbers $q\in (\sqrt{2}, 2)$ such that $A(q)$ is discrete \cite{BoHa02}. It is an interesting question to characterize all
$q\in (\sqrt{2}, 2)$ so that $A(q)$ is dense in $\R$.

 The proof of Theorem \ref{thm-1.3} is based on our study on separation properties of
  homogeneous iterated function systems  (IFS) on $\R$.
Let $m$ be a positive integer  and $\Phi=\{\phi_i\}_{i=0}^{m}$  a family of contractive maps on $\R$ of the form:
$$
\phi_i(x)=\rho x+b_i,\quad i=0,1,\ldots, m,
$$
where
\begin{equation}
\label{e-1'}
0<\rho<1 \quad \mbox{ and }\quad  0=b_0<\ldots<b_m=1-\rho.
\end{equation}
$\Phi$ is called a {\it homogeneous iterated function system} on $\R$.
According to Hutchinson \cite{Hut81}, there is a unique compact set $K:=K_\Phi\subset \R$  such that
$$
K=\bigcup_{i=0}^m \phi_i(K).
$$
We call $K$  the {\it attractor} of $\Phi$.   It is easy to check that
$$
K=\left\{\sum_{n=0}^\infty b_{i_n}\rho^n:\; i_n\in \{0,1,\ldots,m\} \mbox{ for }n\geq 0\right\}.
$$
The condition \eqref{e-1'} implies that the convex hull of $K$ is the unit interval $[0,1]$.

For any finite word $I=i_1i_2\ldots i_n\in \{0,1,\ldots,m\}^n$, write
$\phi_I=\phi_{i_1}\circ\phi_{i_2}\circ\ldots\circ \phi_{i_n}$.
Clearly, $\phi_I(0)=b_{i_1}+\rho b_{i_2}+\ldots+\rho^{n-1} b_{i_n}.$
\begin{defin}
\label{de-1}
Say that $\Phi$ satisfies the \emph{weak separation condition} if there exists a constant $c>0$ such that for any $n\in \N$ and any $I,J\in \{0,1,\ldots, m\}^n$,
\begin{equation*}
\mbox{either}\qquad \rho^{-n}|\phi_I(0)-\phi_{J}(0)|=0\qquad
\mbox{or} \qquad \rho^{-n}|\phi_I(0)-\phi_{J}(0)|\geq c.
\end{equation*}
\end{defin}

\begin{defin}
\label{de-2}
Say that $\Phi$ satisfies the \emph{finite type condition} if there exists a finite set $\Gamma\subset [0,1)$   such that for any $n\in \N$ and any $I,J\in \{0,1,\ldots, m\}^n$,
\begin{equation*}
\mbox{either}\qquad \rho^{-n}|\phi_I(0)-\phi_{J}(0)|\geq 1\qquad
\mbox{or} \qquad \rho^{-n}|\phi_I(0)-\phi_{J}(0)|\in \Gamma.
\end{equation*}
\end{defin}

\begin{rem}
\label{re-1.1}
 By definition,  for $1<q<2$, $q$ is a F-number if and only if
the IFS $\{q^{-1}x, q^{-1}x+(1-q^{-1})\}$ satisfies the finite type condition.
\end{rem}

The concepts of weak separation condition and  finite type condition were respectively introduced  in \cite{LaNg99, NgWa01} in  more general settings for the study of IFSs with overlaps.  One is referred to \cite{Zer96, DLN11} for some equivalent definitions.

It is easy to see that in our setting, the finite type condition implies the  weak separation condition (this is also true in the general settings of \cite{LaNg99, NgWa01}; see \cite{NN02} for a proof). However it is not clear whether the  weak separation condition also implies the finite type condition in our setting. The following theorem gives this implication under an additional assumption on $\Phi$.

\begin{thm}
\label{thm-2.1} Let $\Phi=\{\phi_i(x)=\rho x+b_i\}_{i=0}^m$ be an IFS satisfying \eqref{e-1'}.  Assume in addition that
\begin{equation}
\label{e-2.1}
b_{i+1}-b_i\leq \rho \quad \mbox{ for all}\quad 0\leq i\leq m-1.
\end{equation}
Suppose $\Phi$ satisfies the weak separation condition. Then $\Phi$ also satisfies the finite type condition.
\end{thm}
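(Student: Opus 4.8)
The plan is to recast both separation conditions as properties of a single set of \emph{normalized differences}, and then prove the theorem in contrapositive form by a renormalization (``transport'') argument. Put $\Delta:=\{b_i-b_j:0\le i,j\le m\}$, a finite symmetric subset of $(-1,1)$ containing $0$, and
\[
\mathcal{D}:=\bigcup_{n\ge 0}\bigl\{\rho^{-n}(\phi_I(0)-\phi_{J}(0)):I,J\in\{0,1,\ldots,m\}^n\bigr\}.
\]
Since $\rho^{-n}(\phi_I(0)-\phi_J(0))=\sum_{k=1}^n(b_{i_k}-b_{j_k})\rho^{k-1-n}$, one has $\mathcal{D}=\bigcup_n D_n$ with $D_n:=\sum_{j=1}^n\rho^{-j}\Delta$ (a Minkowski sum); because $0\in\Delta$ the $D_n$ are nested, and $\mathcal{D}=\bigcup_{d\in\Delta}\rho^{-1}(\mathcal{D}+d)$. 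Hence each map $\psi_d\colon x\mapsto\rho^{-1}(x+d)$ ($d\in\Delta$) sends $\mathcal{D}$ into $\mathcal{D}$, while conversely every point of $\mathcal{D}$ equals $\psi_d(x')$ for some $d\in\Delta$ and $x'=\rho x-d\in\mathcal{D}$. In this language the weak separation condition says exactly that $0$ is isolated in $\mathcal{D}$ (i.e.\ $\mathcal{D}\cap(-c,c)=\{0\}$ for some $c>0$), and the finite type condition says exactly that $\mathcal{D}\cap[0,1)$ is finite (take $\Gamma=\mathcal{D}\cap[0,1)$ and use $\mathcal{D}=-\mathcal{D}$). Thus it suffices to show: \emph{if $\mathcal{D}$ has a finite accumulation point, then $0$ is an accumulation point of $\mathcal{D}$}.

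Next I would extract what hypothesis \eqref{e-2.1} contributes. It makes the intervals $\phi_i([0,1])=[b_i,b_i+\rho]$ overlap or abut, so $K=[0,1]$ and, for every $n$, the level-$n$ base points $\{\phi_W(0):|W|=n\}$ are $\rho^n$-dense in $[0,1]$; equivalently $\bigcup_{d\in\Delta}[d-\rho,d+\rho]\supseteq[-1,1]$, equivalently the IFS $\{x\mapsto\rho x-d:d\in\Delta\}$ has attractor exactly $[-1,1]$ (and, incidentally, $\sum_{i=0}^{m-1}(b_{i+1}-b_i)=1-\rho\le m\rho$ forces $\rho^{-1}\le m+1$). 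Let $U$ be the set of finite accumulation points of $\mathcal{D}$. Then $U$ is closed, $U=-U$, and by continuity and injectivity of $\psi_d$ we get $\psi_d(U)\subseteq U$ for every $d\in\Delta$; combined with the previous paragraph this yields $U=\bigcup_{d\in\Delta}\psi_d(U)$, i.e.\ from each $x\in U$ some point $\rho x-d$ again lies in $U$. If $0$ is isolated in $\mathcal{D}$, then also $U\cap(-c,c)=\emptyset$. Iterating the contracting move $x\mapsto\rho x-d$ gives $|x_n|\le\rho^n(|x_0|-1)+1$, so if $U\neq\emptyset$ then $U\cap[-1,1]\neq\emptyset$, and then (by $U=-U$ and $U\cap(-c,c)=\emptyset$) $U$ contains a point $\xi\in[c,1]$.

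Now assume for contradiction that $0$ is isolated in $\mathcal{D}$ but $\mathcal{D}\cap[0,1)$ is infinite; then $U\neq\emptyset$ and we have $\xi\in U\cap[c,1]$ as above. The core of the argument is to produce from equal-length words witnessing the accumulation of $\mathcal{D}$ at $\xi$ a new pair of equal-length words whose normalized separation is nonzero yet smaller than $c$ — contradicting isolation of $0$. The two tools are (i) $\psi_d(\mathcal{D})\subseteq\mathcal{D}$ for \emph{all} $d\in\Delta$ (``prepend any digit''), and (ii) the covering property from \eqref{e-2.1}, which at every depth supplies a cylinder base point within the matching scale of any prescribed point of $[0,1]$ (``append a word tracking $\xi$''). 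Alternating a ``zoom in'' (prepend) with a ``re-center on $\xi$'' (append the tracking word) renormalizes the witnessing pair of cylinders and should drive the normalized separation through a sequence of nonzero values tending to $0$; since all these values lie in $\mathcal{D}$, this contradicts the weak separation condition, so $U=\emptyset$, $\mathcal{D}\cap[0,1)$ is finite, and $\Phi$ is of finite type.

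The hard part — the step I expect to be the real obstacle — is controlling this renormalization. A single zoom-and-recenter only re-centers up to the scale of the $\rho^n$-net from \eqref{e-2.1}, so it does not by itself shrink the separation; one must iterate and show that the available freedom, which is total in the expanding direction $\psi_d$ but only partial in the contracting direction $x\mapsto\rho x-d$, together with \eqref{e-2.1} and the symmetry $\Delta=-\Delta$, genuinely steers the separation to $0$. In particular one has to rule out the ``trapped'' configurations: $\xi$ (or one of its iterates) sitting near $\pm1$, which are the fixed points of the two extreme branches, or near $\pm(1-\rho)=\pm\max\Delta$, where the net from \eqref{e-2.1} only barely reaches. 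It is exactly there that the hypothesis $b_{i+1}-b_i\le\rho$ (equivalently $K=[0,1]$, equivalently $\rho^{-1}\le m+1$) is indispensable, and where the $\pm$-symmetry of $U$ must be used to collapse the boundary cases.
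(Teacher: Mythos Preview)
Your reformulation is correct and useful: writing $\mathcal{D}$ for the set of normalized differences, the weak separation condition is exactly ``$0$ is isolated in $\mathcal{D}$'' and the finite type condition is exactly ``$\mathcal{D}\cap(-1,1)$ is finite''; the invariance $\mathcal{D}=\bigcup_{d\in\Delta}\psi_d(\mathcal{D})$ and the consequences you draw for the accumulation set $U$ (closed, symmetric, $\psi_d$-invariant, and meeting $[c,1]$ if nonempty) are all right. But the proof stops precisely where the content begins. Your last two paragraphs do not contain an argument: they describe a hoped-for ``renormalization'' that \emph{should} produce nonzero elements of $\mathcal{D}$ tending to $0$, while simultaneously conceding that you do not know how to control the iteration, how to avoid the boundary traps near $\pm 1$ and $\pm(1-\rho)$, or how the symmetry is to be used. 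Knowing that $U$ is nonempty, closed, and invariant under both the expanding family $\{\psi_d\}$ and (branchwise) the contracting family $\{x\mapsto\rho x-d\}$ does not by itself yield a contradiction; plenty of closed sets bounded away from $0$ enjoy such invariance. As written, this is a framework plus a wish, not a proof.

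The paper closes the gap with an idea you do not have: a \emph{maximality/pigeon-hole} argument rather than an iterative renormalization. Weak separation gives a finite uniform bound
\[
\ell:=\sup_{x,n}\#\{\phi_I(0):|I|=n,\ \phi_I(0)\in[x,x+\rho^n]\}<\infty,
\]
and one fixes a level $k_0$ and words $W_1,\ldots,W_\ell$ realizing this maximum on some interval $[x_0,x_0+\rho^{k_0}]$. Given any $I,J$ of length $n$ with normalized separation at most $1-\delta$, the overlap $\phi_I([0,1])\cap\phi_J([0,1])$ has length at least $\delta\rho^n$; one slides the maximal template into this overlap via $\phi_{II'}$ with a fixed $I'$ of length $k_1\approx\log_\rho\delta$, and then the covering property from \eqref{e-2.1} (your $\rho^n$-density of base points) supplies $J'$ of length $k_0+k_1$ with $\phi_{JJ'}(0)$ in the translated template interval. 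Maximality of $\ell$ forces $\phi_{JJ'}(0)=\phi_{II'W_s}(0)$ for some $s$, whence $\rho^{-n}(\phi_J(0)-\phi_I(0))$ lies in the \emph{fixed} finite set $\{\phi_{\tilde I}(0)-\phi_{\tilde J}(0):|\tilde I|=|\tilde J|=k_0+k_1\}$. A short separate step then upgrades ``$\le 1-\delta$'' to ``$<1$''. The covering hypothesis \eqref{e-2.1} enters exactly once, to produce $J'$; no iteration or boundary analysis is needed. If you want to salvage your dynamical viewpoint, the missing ingredient is precisely this finiteness-from-maximality step---the invariance properties of $U$ alone will not manufacture it.
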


We remark that the condition \eqref{e-2.1} is equivalent to $[0,1]=\bigcup_{i=0}^m \phi_i([0,1])$, i.e., $K_\Phi=[0,1]$.

Now for a given pair $(q,m)$ with $1<q\leq m+1$, consider a special IFS  $\Phi=\{\rho x+b_i\}_{i=0}^m$ with $\rho={q}^{-1}$ and $b_i=i(1-{q}^{-1})/m$ for $0\leq i\leq m$.   Then $\Phi$ satisfies the assumptions in Theorem \ref{thm-2.1}.  However   $\Phi$ satisfies the weak separation condition if and only if $0$ is not an accumulation point of $Y_m(q)$; whilst $\Phi$ satisfies the finite type condition if and only if $Y_m(q)$ has no finite accumulation points in $\R$ (see Lemma \ref{lem-2.1}). Hence according to Theorem \ref{thm-2.1}, the condition that $0$ is not an accumulation point of $Y_m(q)$ implies that $Y_m(q)$ has no finite accumulation points in $\R$; from which Theorem \ref{thm-1.3} follows. As a corollary of this and Theorem \ref{thm-1.1}, we have

\begin{cor}
\label{cor-2.1}
 For  a given pair $(q,m)$ with $1<q<m+1$, let $\Phi$ denote the IFS $\{\phi_i(x)=q^{-1}x +i(1-q^{-1})/m\}_{i=0}^m$ on $\R$.  Then
 $\Phi$ satisfies the weak separation condition (resp. the finite type condition) if and only if  $q$ is a Pisot number.
\end{cor}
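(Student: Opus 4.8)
The plan is to obtain Corollary~\ref{cor-2.1} by combining Theorem~\ref{thm-2.1} and Theorem~\ref{thm-1.1} with Lemma~\ref{lem-2.1} and the classical theorems of Garsia \cite{Gar62} and Drobot \cite{Dro73} recalled in the introduction. The first step is to check that the specific IFS $\Phi=\{\phi_i(x)=q^{-1}x+i(1-q^{-1})/m\}_{i=0}^m$ meets the hypotheses of Theorem~\ref{thm-2.1}: here $\rho=q^{-1}$ and $b_i=i(1-q^{-1})/m$, so $0<\rho<1$ and $0=b_0<b_1<\dots<b_m=1-q^{-1}$, while $b_{i+1}-b_i=(1-q^{-1})/m\le q^{-1}=\rho$ is equivalent to $q-1\le m$ and hence holds since $q<m+1$. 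Thus \eqref{e-2.1} is satisfied, and Theorem~\ref{thm-2.1} tells us that for this $\Phi$ the weak separation condition implies the finite type condition; since the reverse implication always holds, the two conditions are equivalent for this particular $\Phi$, and it suffices to prove that $\Phi$ satisfies the weak separation condition if and only if $q$ is a Pisot number.

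For the direction ``$q$ Pisot $\Rightarrow$ weak separation condition'', I would invoke Garsia's theorem to get that $Y_m(q)$ is uniformly discrete, so in particular $0$ is not an accumulation point of $Y_m(q)$; by Lemma~\ref{lem-2.1} this is exactly the weak separation condition for $\Phi$. (One could equally well pass through the finite type condition: uniform discreteness means $Y_m(q)$ has no finite accumulation point, which by Lemma~\ref{lem-2.1} is the finite type condition for $\Phi$.) For the converse, assume $\Phi$ satisfies the weak separation condition; then by Lemma~\ref{lem-2.1} the point $0$ is not an accumulation point of $Y_m(q)$, so by Drobot's theorem $Y_m(q)$ is not dense in $\R$, and since $1<q<m+1$ Theorem~\ref{thm-1.1} forces $q$ to be a Pisot number. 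This closes the chain of implications.

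The main point requiring care is formal rather than substantive: one must use Lemma~\ref{lem-2.1} correctly to translate both separation properties of $\Phi$ into statements about the accumulation points of $Y_m(q)$, and verify that \eqref{e-2.1} holds for this $\Phi$ so that Theorem~\ref{thm-2.1} applies. Once this dictionary is in place, no new analytic difficulty remains; the corollary is a bookkeeping consequence of Theorems~\ref{thm-1.1} and~\ref{thm-2.1} together with the cited classical results. The only mild subtlety is that Theorem~\ref{thm-1.1} is stated with the strict inequality $q<m+1$, which is precisely the hypothesis of the corollary, so the logical step from ``$Y_m(q)$ not dense'' to ``$q$ is Pisot'' goes through without a gap.
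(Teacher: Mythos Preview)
Your proposal is correct and follows essentially the same route as the paper: verify that \eqref{e-2.1} holds so that Theorem~\ref{thm-2.1} gives WSC $\Leftrightarrow$ FTC for this $\Phi$, then use Lemma~\ref{lem-2.1} to translate WSC into ``$0$ is not an accumulation point of $Y_m(q)$'' and appeal to Theorem~\ref{thm-1.1}. The only cosmetic differences are that the paper packages both directions into the single invocation of Theorem~\ref{thm-1.1} (rather than citing Garsia and Drobot separately), and that when applying Lemma~\ref{lem-2.1} one should note, as you implicitly do, that the set $Y$ there equals $\tfrac{q-1}{m}\,Y_m(q)$ for this choice of $b_i$, so its accumulation points coincide with those of $Y_m(q)$ up to scaling.
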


The paper is organized as follows. In Section \ref{S-2}, we prove Theorem \ref{thm-2.1}. In Section \ref{S-3}, we give some final remarks and questions.

\bigskip
\section{Separation properties of IFSs and the proof of Theorem \ref{thm-2.1}}
\label{S-2}
Before giving the proof of Theorem \ref{thm-2.1}, we first present two lemmas.

\begin{lem}
\label{lem-2.1}
Let $\Phi=\{\phi_i(x)=\rho x+b_i\}_{i=0}^m$ be an IFS on $\R$ with $$0<\rho<1,\qquad 0=b_0< \ldots< b_m=1-\rho.$$
Denote
$$Y=\left\{\sum_{i=1}^n\epsilon_i \rho^{-i}:\; \epsilon_i\in \{b_s-b_t: 0\leq s,t\leq m\},\; n= 1,\ldots \right\}.$$
 Then $\Phi$ satisfies the weak separation condition if and only if $0$ is not an accumulation point of $Y$; whilst $\Phi$ satisfies the finite type condition if and only if $Y$ has no finite accumulation points in $\R$.
\end{lem}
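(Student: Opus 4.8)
The plan is to express all four conditions through the single set of renormalised cylinder differences
$$D:=\{\rho^{-n}(\phi_I(0)-\phi_J(0)):n\in\N,\ I,J\in\{0,\dots,m\}^n\},$$
and to identify it with $Y$. From $\phi_I(0)=\sum_{k=1}^{n}\rho^{k-1}b_{i_k}$ one gets $\rho^{-n}(\phi_I(0)-\phi_J(0))=\sum_{\ell=1}^{n}\rho^{-\ell}(b_{i_{n+1-\ell}}-b_{j_{n+1-\ell}})\in Y$, and conversely every element of $Y$ is obtained this way by choosing the two words with equal length; hence $D=Y$. Note also $0\in Y$, $Y=-Y$, and $\{\rho^{-n}|\phi_I(0)-\phi_J(0)|\}=\{|y|:y\in Y\}$. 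Given this, the first assertion is immediate: the weak separation condition says precisely that $Y\cap(-c,c)=\{0\}$ for some $c>0$, i.e.\ that $0$ is not an accumulation point of $Y$. Similarly the finite type condition says precisely that $\{|y|:y\in Y,\ |y|<1\}$ is finite, i.e.\ (using $Y=-Y$) that $Y\cap(-1,1)$ is finite; so the second assertion reduces to: $Y\cap(-1,1)$ is finite $\iff$ $Y$ has no accumulation point in $\R$.

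The implication ``$\Leftarrow$'' is trivial, since then the bounded set $Y\cap[-1,1]$ is finite by Bolzano--Weierstrass. For ``$\Rightarrow$'' I would use the self-similarity of $Y$. Writing $\Delta=\{b_s-b_t:0\le s,t\le m\}$, one checks directly that $Y=\rho^{-1}(Y+\Delta)$, so $\rho^{-1}Y\subseteq Y$ and $\#(Y\cap[-R,R])\le\#\Delta\cdot\#(Y\cap[-(\rho R+1-\rho),\rho R+1-\rho])$ for every $R>0$. Suppose $Y\cap(-1,1)$ is finite but $Y$ is not discrete, and put $R^*=\inf\{R>0:Y\cap[-R,R]\text{ is infinite}\}$. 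The displayed inequality forces $R^*\le\rho R^*+1-\rho$, hence $R^*\le1$; combined with the finiteness of $Y\cap(-1,1)$ this gives $R^*=1$, and it follows that one of $1,-1$ --- say $1$ --- is an accumulation point of $Y$, necessarily approached from within $(1,\infty)$.

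It remains to contradict the finiteness of $Y\cap(-1,1)$. Take distinct $y_k>1$ in $Y$ with $y_k\to1$, each written with a representation $y_k=\sum_{i=1}^{n_k}\epsilon^{(k)}_i\rho^{-i}$, $\epsilon^{(k)}_i\in\Delta$, of minimal length (so $n_k\to\infty$ and $\epsilon^{(k)}_{n_k}\ne0$). Then $\rho^{n_k}y_k=\sum_{r=0}^{n_k-1}\epsilon^{(k)}_{n_k-r}\rho^r\to0$; passing to a subsequence along which, for each fixed $r$, $\epsilon^{(k)}_{n_k-r}$ eventually equals some $\delta_r\in\Delta$, we obtain the relation $\sum_{r\ge0}\delta_r\rho^r=0$ with $\delta_0\ne0$. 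Comparing, for fixed $r$ and large $k$, the top $r$ coefficients of $y_k$ with this relation shows that if a partial sum $p_N:=\sum_{r=0}^{N}\delta_r\rho^r$ vanished then $y_k$ would have a representation of length $<n_k$, contradicting minimality; hence all $p_N\ne0$. Since $p_N=-\sum_{r>N}\delta_r\rho^r$ we have $|p_N|\le\rho^{N+1}$, so the elements $\zeta_N:=\rho^{-N}p_{N-1}=\sum_{i=1}^{N}\delta_{N-i}\rho^{-i}$ all lie in $(Y\cap[-1,1])\setminus\{0\}$. If infinitely many $\zeta_N$ are distinct, then $Y\cap(-1,1)$ is infinite --- the desired contradiction.

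The main obstacle is the leftover case, where only finitely many values $\zeta_N$ occur; here the relation $\sum_r\delta_r\rho^r=0$ is genuinely infinite and the recursion $\zeta_{N+1}=\rho^{-1}(\zeta_N+\delta_N)$ confines $(\zeta_N)$ to a finite subset of $Y\setminus\{0\}$. I would rule this out by a closer analysis: peeling off from $y_k$ its top digits (which again gives an element of $Y$) should produce infinitely many distinct points of $Y$ in $(-1,1)$, or, equivalently, force $0$ itself to be an accumulation point of $Y$. Disposing of this boundary case at the critical value $1$ is the one non-routine step; the rest is bookkeeping around $D=Y$ and the self-similarity $Y=\rho^{-1}(Y+\Delta)$.
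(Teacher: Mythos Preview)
Your identification $D=Y$, the weak-separation equivalence, and the reduction of the finite-type statement to ``$Y\cap(-1,1)$ finite $\Longleftrightarrow$ $Y$ has no finite accumulation point'' are all correct and match the paper. Your $R^*$ argument showing that any accumulation must occur at $\pm 1$, approached from outside, is also fine.

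The genuine gap is exactly the one you flag yourself. In the ``leftover case'' where the $\zeta_N$ take only finitely many values, your argument stops. This case is not a technicality that a closer look dispels: the recursion $\zeta_{N+1}=\rho^{-1}(\zeta_N+\delta_N)$ on a finite state set corresponds to the relation $\sum_{r\ge0}\delta_r\rho^r=0$ being eventually periodic, and such relations certainly exist (any time $\rho$ satisfies a polynomial with coefficients in $\Delta$). Your suggested fix --- peeling top digits off the $y_k$ --- runs into the same difficulty: the peeled remainders will again be governed by the finite cycle of $\zeta$-values and need not produce new points of $Y\cap(-1,1)$. So as written the proof is incomplete, and the missing step is not routine.

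The paper closes this gap by a different, short idea that avoids subsequences and infinite relations altogether. With $A:=Y\cap[-1,1]$ and $B:=\Delta$ finite, choose $u>1$ so that $(1,u)$ misses the finite set $\rho^{-1}(A+B)$. One then shows $Y\cap(1,u)=\emptyset$ by \emph{minimal-degree descent}: if $z\in Y\cap(1,u)$ has a shortest representation of length $n$, write $z=\rho^{-1}(w+\epsilon_1)$ with $w\in Y$ of degree $\le n-1$; the choice of $u$ forces $w\notin A$, hence $|w|>1$, while the bound $|\epsilon_1|\le 1-\rho$ forces $|w|<z$. Thus $|w|\in Y\cap(1,u)$ with strictly smaller degree --- contradiction. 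Having a genuine gap $(1,u)$ above $1$ (and by symmetry below $-1$), one finishes with the self-similarity you already noted, now in the form $Y=\rho^{-n}Y+D_n$: any accumulation point $v$ with $|v|\ge u$ is pushed by a suitable $n$ to an accumulation point in $(-u,u)$, which is impossible. This descent-to-a-gap step is the key device your proposal is missing.
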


\begin{proof}
For $n\geq 1$,  $I=i_1\ldots i_n, J=j_1\ldots j_n\in \{0,1,\ldots,m\}^n$, we
have
\begin{equation}
\label{e-e1}
\rho^{-n}(\phi_I(0)-\phi_J(0))=\sum_{s=1}^n (b_{i_{s}}-b_{j_s}) \rho^{-(n+1-s)}=\sum_{s=1}^n (b_{i_{n+1-s}}-b_{j_{n+1-s}}) \rho^{-s}.
\end{equation}
Hence by Definition \ref{de-1}, $\Phi$ satisfies the weak separation condition if and only if $0$ is not an accumulation point of $Y$.
In the following  we show that $Y$ has no finite accumulation points if and only if $\Phi$ satisfies the finite type condition.

By   \eqref{e-e1} and Definition \ref{de-2}, we see that $\Phi$ satisfies the finite type condition if and only if  $Y\cap [-1,1]$ contains only finitely many points.
It is direct to see that $Y$ has no finite accumulation points implies $Y\cap [-1,1]$ contains only finitely many points.
Hence to finish the proof, we only need to show that the finiteness assumption of  $Y\cap [-1,1]$  implies that $Y$ has no finite accumulation points.

From now on, we  assume that   $Y\cap [-1,1]$ contains only finitely many points. Set $A=Y\cap [-1,1]$ and  $B=\{b_{i}-b_{j}: 0\leq i, j\leq m\}$. Since
$A$ and $B$ are finite sets, we can pick  $u>1$ such that $(1,u)\cap  \rho^{-1}(A+B)=\emptyset$, where
$$\rho^{-1}(A+B):= \{\rho^{-1} (x+\epsilon):\; x\in A,\; \epsilon\in B\}.$$
Since $0\in A$, we have $(1,u)\cap \rho^{-1} B=\emptyset$.
We first claim that  $Y\cap (1,u)=\emptyset$. To see this, for any $y\in Y$, let $\deg(y)$ denote the smallest $n\in \N$ such that
$
y=\sum_{i=1}^n\epsilon_i \rho^{-i}
$
for some $\epsilon_1,\ldots,\epsilon_n\in B$.  Assume  on the contrary that $Y\cap (1,u)\neq \emptyset$. Define
$$N=\min\{\deg(y):\; y\in Y\cap (1,u)\}.$$ 
Then $N\in \N$. Pick $z\in Y\cap (1,u)$ so that $\deg(z)=N$.
 Since $(1,u)\cap \rho^{-1}B= \emptyset$, we have $z\not\in \rho^{-1}B$ and thus $N=\deg(z)\geq 2$.  Then there exist $\epsilon_1,\ldots,\epsilon_N\in B$ such that
$$z=\sum_{i=1}^N\epsilon_i \rho^{-i}.$$
Denote $w=\sum_{i=1}^{N-1}\epsilon_{i+1} \rho^{-i}$. Then $w\in Y$  and $z=\rho^{-1}w+\rho^{-1}\epsilon_1$.  Notice that $w\not\in  A$ (and hence $|w|>1$); for otherwise we have $z\in \rho^{-1}(A+B)$, contradicting $(1,u)\cap \rho^{-1}(A+B)=\emptyset$ and $z\in (1,u)$. On the other hand, we must have $|w|<z$; if not,
$$
|\rho^{-1}\epsilon_1|=|\rho^{-1}w-z|\geq \rho^{-1}|w|-z\geq (\rho^{-1}-1)z>\rho^{-1}-1= \rho^{-1}\max B,
$$
leading to a contraction. Therefore, we have $1<|w|<z<u$, and thus $|w|\in Y\cap (1,u)$. However, $\deg(|w|)\leq N-1<\deg(z)$, contradicting the minimality of $\deg(z)$. Therefore, we must have  $Y\cap (1,u)=\emptyset$.

Since $Y=-Y$, we also have $Y\cap (-u,-1)=\emptyset$.  Thus $Y\cap (-u,u)$ contains only finitely many points.
In the end, we show that $Y$ has no finite accumulation points. Assume on the contrary that $Y$ has a finite accumulation point, saying $v$.  We derive a contradiction as below.  Note that  $Y\cap (-u, u)$ contains only finitely many points. Hence we must have $|v|\geq u$. Note that for any $n\in \N$,
\begin{equation}
\label{e-t}
Y=\rho^{-n}Y+D_n,
\end{equation}
where $D_n:=\{\sum_{i=1}^n\epsilon_i \rho^{-i}:\; \epsilon_i\in B \mbox{ for all }i \}$. Take a large $n$  such that $\rho^{n}|v|+1<u$. By \eqref{e-t}, $Y$ has a finite accumulation point $w$ (it is possible that $w\notin Y$),  and $z\in D_n$ such that $v= \rho^{-n}w+z$. Then
$$|w|=|\rho^n(v-z)|\leq \rho^n |v|+\rho^n\sum_{i=1}^n (1-\rho) \rho^{-i}<\rho^n |v|+1<u.$$
This contradicts the fact that $Y$ has no accumulation points in $(-u,u)$.
\end{proof}

\begin{lem}
\label{lem-2.2}
Let $\Phi=\{\phi_i(x)=\rho x+b_i\}_{i=0}^m$ be an IFS satisfying $$0<\rho<1,\qquad 0=b_0< \ldots< b_m=1-\rho$$ and
$$
b_{i+1}-b_i\leq \rho \quad \mbox{ for all}\quad 0\leq i\leq m-1.
$$
Then the following properties hold:
\begin{enumerate}[\upshape(1)]
\item For any $n\in \N$, we have $[0,1]=\bigcup_{I\in \{0,1,\ldots,m\}^n}\phi_I([0,1])$;
\item For  $n, k\in \N$ and $J\in \{0,1,\ldots, m\}^n$, if $[c,d]$ is a subinterval of $\phi_J([0,1])$ with length $\geq \rho^{n+k}$,  then there exists
$J'\in \{0,1,\ldots, m\}^k$ such that $\phi_{JJ'}(0)\in [c,d]$.
\end{enumerate}
\end{lem}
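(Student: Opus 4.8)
The plan is to prove the two assertions of Lemma~\ref{lem-2.2} essentially by direct induction, exploiting that the hypothesis $b_{i+1}-b_i\le\rho$ forces the images $\phi_i([0,1])$ to cover $[0,1]$ without gaps.

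First, for part~(1), I would check the base case $n=1$: since $\phi_i([0,1])=[b_i,b_i+\rho]$ and $0=b_0$, $b_m+\rho=1$, the hypothesis $b_{i+1}-b_i\le\rho$ gives $b_{i+1}\le b_i+\rho$, so consecutive images overlap (or abut) and their union is exactly $[b_0,b_m+\rho]=[0,1]$. For the inductive step, suppose $[0,1]=\bigcup_{I\in\{0,\ldots,m\}^n}\phi_I([0,1])$. Applying the map $\phi_i$ to both sides and using the self-similar identity $[0,1]=\bigcup_{i=0}^m\phi_i([0,1])$ from the $n=1$ case, I get
$$[0,1]=\bigcup_{i=0}^m\phi_i([0,1])=\bigcup_{i=0}^m\phi_i\Big(\bigcup_{I\in\{0,\ldots,m\}^n}\phi_I([0,1])\Big)=\bigcup_{iI\in\{0,\ldots,m\}^{n+1}}\phi_{iI}([0,1]),$$
which is the claim for $n+1$.

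For part~(2), the key observation is that $\phi_J$ is an affine similarity with ratio $\rho^n$, so $\phi_J^{-1}$ maps the subinterval $[c,d]\subset\phi_J([0,1])$ to a subinterval $[c',d']\subset[0,1]$ of length $d'-c'=\rho^{-n}(d-c)\ge\rho^k$. Then I would apply part~(1) with $n$ replaced by $k$: we have $[0,1]=\bigcup_{J'\in\{0,\ldots,m\}^k}\phi_{J'}([0,1])$, where each $\phi_{J'}([0,1])$ has length $\rho^k\le d'-c'$. An interval of length $\rho^k$ starting at some point of the covering family must have its left endpoint $\phi_{J'}(0)$ lie in $[c',d']$ for a suitable $J'$; more precisely, since the left endpoints $\phi_{J'}(0)$ (ordered increasingly over $J'$) run from $0$ to $1-\rho^k$ with consecutive gaps at most $\rho^k$ (this is exactly the content of the covering property at level $k$, spelled out in terms of endpoints), and $[c',d']$ has length $\ge\rho^k$, at least one $\phi_{J'}(0)$ falls in $[c',d']$. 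Pulling back by $\phi_J$, which is monotone, gives $\phi_{JJ'}(0)=\phi_J(\phi_{J'}(0))\in[c,d]$, as required.

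The only mildly delicate point is the endpoint bookkeeping in part~(2): I need the elementary fact that a finite increasing sequence $0=x_0\le x_1\le\cdots\le x_N=1-\rho^k$ with $x_{\ell+1}-x_\ell\le\rho^k$ for all $\ell$ must meet any interval of length $\ge\rho^k$ inside $[0,1]$. This is immediate — if the interval $[c',d']$ contained no $x_\ell$, it would lie strictly between two consecutive $x_\ell,x_{\ell+1}$ (or beyond $x_N$, impossible since $d'\le 1$ and $x_N+\rho^k=1$, or before $x_0=0$), forcing $d'-c'<x_{\ell+1}-x_\ell\le\rho^k$, a contradiction. Verifying that the sorted left-endpoint sequence $\{\phi_{J'}(0):J'\in\{0,\ldots,m\}^k\}$ does satisfy $x_0=0$, $x_N=1-\rho^k$, and consecutive gaps $\le\rho^k$ follows from part~(1) (the covering by the intervals $[\phi_{J'}(0),\phi_{J'}(0)+\rho^k]$ has no gaps), so no real obstacle arises; it is just a matter of writing the reduction carefully.
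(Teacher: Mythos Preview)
Your proof is correct and follows the same approach as the paper: establish the $n=1$ covering and iterate for (1), then for (2) pull back $[c,d]$ through $\phi_J^{-1}$ to an interval of length $\ge\rho^k$ in $[0,1]$ and invoke (1) at level $k$ to locate a left endpoint $\phi_{J'}(0)$ inside it. The paper compresses the endpoint bookkeeping you spell out into a single sentence (``By (1), there exists $J'$ such that $\phi_{J'}(0)\in\phi_J^{-1}([c,d])$''), but the underlying reasoning is identical.
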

\begin{proof}
It is direct to check that $[0,1]=\bigcup_{i=0}^m \phi_i([0,1])$.  Iterating this relation for $n$ times yields (1).

To see (2),  note that $\phi_J^{-1}([c,d])$ is a subinterval of $[0,1]$ with length  $\geq \rho^k$. By (1), there exists $J'\in \{0,1,\ldots, m\}^k$ such that
$\phi_{J'}(0)\in \phi_J^{-1}([c,d])$. Therefore $\phi_{JJ'}(0)\in [c,d]$.
\end{proof}

\begin{proof}[Proof of Theorem \ref{thm-2.1}] We divide the proof into some small steps.

{\it Step 1}. Let $0<\delta<1$. We claim that there is a finite set $\Gamma_\delta\subset [0,1-\delta]$ such that for each $n\in \N$ and $I,J\in \{0,1,\ldots,m\}^n$,
\begin{equation}\label{e-3}
\mbox{either}\qquad \rho^{-n}|\phi_I(0)-\phi_{J}(0)|>1-\delta\qquad
\mbox{or} \qquad \rho^{-n}|\phi_I(0)-\phi_{J}(0)|\in \Gamma_\delta.
\end{equation}

To prove  the above claim, we use an idea in \cite{FeLa09}.  Since $\Phi$ satisfies the weak separation condition,  according to  the pigeon-hole principle,  we have
\begin{equation}
\label{e-4}
\sup_{(x,k):\; x\in [0,1],\; k\in \N}\# \left\{\phi_I(0): \; \phi_I(0)\in [x, x+\rho^k],\; I\in \{0,1,\ldots, m\}^k\right\}:=\ell<\infty,
\end{equation}
where $\# X$ denotes the cardinality of $X$.  Indeed, we have $\ell\leq1/c+1$, where $c$ is the constant in Definition \ref{de-1}.

Pick $x\in [0,1]$ and $k\in \N$ so that  the supremum in \eqref{e-4} is attained at $(x,k)$.   Clearly, the supremum in \eqref{e-4} is also attained at $(\phi_I(x),n+k)$ for any $n\in \N$ and $I\in \{0,1,\ldots, m\}^n$.  Pick a large integer $k'$ so that $\rho^{k'}+\rho^{k'+k}<1$ and let
$$x_0=\phi_{0^{k'}}(x),\quad k_0=k'+k.$$ Then $[x_0, x_0+\rho^{k_0}]\subset [0,1]$ and the supremum in \eqref{e-4} is attained at $(x_0,k_0)$.  Choose $W_1,\ldots, W_\ell\in \{0,1,\ldots,m\}^{k_0}$ such that $\phi_{W_1}(0),\ldots, \phi_{W_\ell}(0)$ are different points in $[x_0, x_0+\rho^
{k_0}]$.

Fix $0<\delta< 1$. Pick $k_1\in \N$ so that
\begin{equation}
\label{e-5}
\rho\delta\leq \rho^{k_1}<\delta.
\end{equation}

Now suppose that $I,J\in \{0,1,\ldots,m\}^n$ for some $n\in \N$ such that $$|\phi_I(0)-\phi_J(0)|\leq (1-\delta)\rho^n.$$ Without loss of generality, assume that
$\phi_I(0)\leq \phi_J(0)$. Denote $\Delta=[\phi_J(0), \phi_I(0)+\rho^n]$. Clearly $\Delta\subset \phi_I([0,1])\cap \phi_J([0,1])$, and $|\Delta|\geq \delta \rho^n$, where $|\Delta|$ denotes the length of $\Delta$. Since $\phi_I(0)+\rho^n=\phi_I(1)$, we see that $\phi_I^{-1}(\Delta)=[u,1]$ for some $u\in (0,1)$ with $1-u\geq \delta>\rho^{k_1}$. Set  $I'=\underbrace{m\ldots m}_{k_1}$.  Since $\phi_m(1)=1$, we have $\phi_{I'}(1)=1$.  Observe that $\phi_{I'}([0,1])$ has length  $\rho^{k_1}$,  therefore  $\phi_{I'}([0,1])=[1-\rho^{k_1},1]\subset [u,1]$,
and thus $\phi_{II'}([0,1])\subset \phi_I([u,1])=\Delta$; in particular,
$$
\phi_{II'}([x_0,x_0+\rho^{k_0}])\subset \Delta\subset \phi_J([0,1]).
$$
Note that  $\phi_{II'}([x_0,x_0+\rho^{k_0}])$ is a subinterval of $\phi_J([0,1])$ with length $\rho^{n+k_0+k_1}$. By Lemma \ref{lem-2.2}(2), there exists $J'\in \{0,1,\ldots,m\}^{k_0+k_1}$ such that
$\phi_{JJ'}(0)\in \phi_{II'}([x_0,x_0+\rho^{k_0}])$. Let $x_1=\phi_{II'}(x_0)$. Then  $\phi_{II'}([x_0,x_0+\rho^{k_0}])=[x_1, x_1+\rho^{n+k_0+k_1}]$. Recall that $\phi_{W_1}(0),\ldots, \phi_{W_\ell}(0)$ are different points in $[x_0, x_0+\rho^
{k_0}]$, hence
$\phi_{II'W_1}(0),\ldots, \phi_{II'W_\ell}(0)$ are $\ell$ distinct points in $[ x_1, x_1+\rho^{n+k_0+k_1}]$. Since $\phi_{JJ'}(0)\in [x_1,  x_1+\rho^{n+k_0+k_1}]$, by the maximality of $\ell$ (cf. \eqref{e-4}), we must have
$$
\phi_{JJ'}(0)\in \left\{\phi_{II'W_j}(0):\; 1\leq j\leq \ell \right\}.
$$
That is,
$$
\phi_{J}(0)+\rho^n \phi_{J'}(0)\in \left\{\phi_{I}(0)+\rho^n \phi_{I'W_j}(0):\; 1\leq j\leq \ell \right\}.
$$
It follows that
\begin{eqnarray*}
\rho^{-n}(\phi_{J}(0)-\phi_I(0))&\in & \left\{\phi_{I'W_j}(0)-\phi_{J'}(0):\; 1\leq j\leq \ell \right\}\\
&\subset & \left\{
\phi_{\tilde{I}}(0)-\phi_{\tilde{J}}(0):\;  \tilde{I},\tilde{J}\in \{0,1,\ldots,m\}^{k_0+k_1} \right\}.
\end{eqnarray*}
Hence we can finish the proof of the claim  in Step 1 by setting
\begin{equation}
\label{e-6}
\Gamma_\delta=\left\{
\phi_{\tilde{I}}(0)-\phi_{\tilde{J}}(0):\;  \tilde{I},\tilde{J}\in \{0,1,\ldots,m\}^{k_0+k_1} \right\}\cap [0,1-\delta].
\end{equation}

 \medskip

 {\it Step 2}. Denote $\gamma=\min\{ b_1, b_m-b_{m-1}\}$ and  $B=\{b_i-b_j:\; 0 \leq i,j\leq m\}$. By \eqref{e-1'} and \eqref{e-2.1},  $0<\gamma\leq \rho<1$. Let $\Gamma_\gamma$ be given as in Step 1 (in which we take $\delta=\gamma$). Set
 $$\eta:=\max \left({\rho}^{-1}(\pm \Gamma_\gamma+ B)\cap [0,1)\right).$$
 Clearly $0\leq\eta<1$. We claim that for any $n\in \N$ and $I,J\in \{0,1,\ldots,m\}^n$,
 \begin{equation}\label{e-7}
\mbox{either}\qquad \rho^{-n}|\phi_I(0)-\phi_{J}(0)|\geq 1\qquad
\mbox{or} \qquad \rho^{-n}|\phi_I(0)-\phi_{J}(0)|\leq \eta.
\end{equation}

Assume the claim is not true. Then we can find $n\in \N$ and $I,J\in  \{0,1,\ldots,m\}^n$,
such that
\begin{equation}
\label{e-7'}
\eta<\rho^{-n}(\phi_J(0)-\phi_{I}(0))<1.
\end{equation}
Assume further that the above $n$ is the smallest. As below we derive a contradiction.

First we show that $n\geq 2$. For otherwise, $n=1$ and by \eqref{e-7'}, $0<\rho^{-1}(\phi_J(0)-\phi_I(0))<1$, and  hence  $\rho^{-1}(\phi_J(0)-\phi_I(0))\in {\rho}^{-1}B\cap [0,1)$; by the definition of $\eta$  and the fact $0\in \Gamma_\gamma$,  we have $\rho^{-1}(\phi_J(0)-\phi_I(0))\leq \eta$, a contraction to \eqref{e-7'}.

Since $n\geq 2$, we can write
$$
I=I'i, \quad J=J'j,
$$
where $I',J'\in \{0,1,\ldots,m\}^{n-1}$ and $i,j\in \{0,1,\ldots,m\}$. Then we have
$$
\phi_I(0)=\phi_{I'}(0)+\rho^{n-1}b_i,\quad \phi_{J}(0)=\phi_{J'}(0)+\rho^{n-1}b_j.
$$
Therefore,
\begin{equation}
\label{e-8}
\phi_{J'}(0)-\phi_{I'}(0)=\phi_{J}(0)-\phi_{I}(0)+\rho^{n-1}(b_i-b_j).
\end{equation}
By \eqref{e-8} and \eqref{e-7'}, we have \begin{equation}
\label{e-8'}
|\phi_{J'}(0)-\phi_{I'}(0)|<\rho^{n}+\rho^{n-1}(1-\rho)=\rho^{n-1}.
\end{equation}
 In the following we show further that
\begin{equation}
\label{e-9}
|\phi_{J'}(0)-\phi_{I'}(0)|\leq (1-\gamma)\rho^{n-1}.
\end{equation}

By \eqref{e-8} and the fact that $\phi_{J}(0)>\phi_I(0)$, we have
\begin{eqnarray}
\label{e-10}
\phi_{J'}(0)-\phi_{I'}(0)>\rho^{n-1}(b_i-b_j)\geq -\rho^{n-1}(1-\rho)\geq -\rho^{n-1}(1-\gamma).
\end{eqnarray}
To get an upper bound for $\phi_{J'}(0)-\phi_{I'}(0)$, we consider the following two scenarios respectively:
\begin{itemize}
\item[(i)] $(i,j)=(m,0)$;
\item[(ii)] $(i,j)\neq (m,0)$.
\end{itemize}
First assume that (i) occurs. Then by \eqref{e-8},
\begin{eqnarray*}
\phi_{J'}(0)-\phi_{I'}(0)=\phi_{J}(0)-\phi_{I}(0)+\rho^{n-1}(1-\rho),
\end{eqnarray*}
from which and \eqref{e-7'} we obtain
\begin{eqnarray*}
\frac{\phi_{J'}(0)-\phi_{I'}(0)}{\rho^{n-1}}&=&\frac{\phi_{J}(0)-\phi_{I}(0)}{\rho^{n-1}}+(1-\rho)\\
&=&\frac{\phi_{J}(0)-\phi_{I}(0)}{\rho^n}+(1-\rho)\left(1-\frac{\phi_J(0)-\phi_I(0)}{\rho^n}\right)\\
&>&\frac{\phi_{J}(0)-\phi_{I}(0)}{\rho^n}>\eta.
\end{eqnarray*}
This together with \eqref{e-8'} yields that $1>\rho^{-(n-1)}(\phi_{J'}(0)-\phi_{I'}(0))>\eta$, contradicting the minimality of $n$. Hence (i) can not happen, and (ii) must occur.  Since $(i,j)\neq (m,0)$, we have
$$b_j-b_i\geq \min\{b_1-b_m, \; b_0-b_{m-1}\}=\min\{b_1-(1-\rho), \; -b_{m-1}\}.$$
This together with \eqref{e-8} yields
\begin{equation}
\label{e-11}
\begin{split}
\phi_{J'}(0)-\phi_{I'}(0)&\leq \rho^n-\rho^{n-1}\cdot \min\{b_1-(1-\rho), \; -b_{m-1}\}\\
&=\rho^{n-1}\cdot\max\{ 1-b_1, 1-(b_m-b_{m-1})\}\\
&=\rho^{n-1}(1-\gamma).
\end{split}
\end{equation}
Now \eqref{e-9} follows from \eqref{e-10} and \eqref{e-11}.

According to \eqref{e-9} and the claim in Step 1, we have $\rho^{-(n-1)}|\phi_{J'}(0)-\phi_{I'}(0)|\in \Gamma_\gamma$.
Then by \eqref{e-8},
\begin{eqnarray*}
\rho^{-n}(\phi_{J}(0)-\phi_{I}(0))&=&\rho^{-n}(\phi_{J'}(0)-\phi_{I'}(0))+\rho^{-1}(b_j-b_i)\\
&\in& \rho^{-1} (\pm \Gamma_\gamma+B).
\end{eqnarray*}
This together with \eqref{e-7'} yields $\rho^{-n}(\phi_{J}(0)-\phi_{I}(0))\in \rho^{-1} (\pm \Gamma_\gamma+B)\cap [0,1)$. By the definition of $\eta$, we have $\rho^{-n}(\phi_{J}(0)-\phi_{I}(0))\leq \eta$, which contradicts \eqref{e-7'}. This proves  \eqref{e-7}.

\medskip
{\it Step 3}. Let $\eta\in [0,1)$ be defined as in Step 2. Combining  \eqref{e-7} with the claim in Step 1, we have
for any $n\in \N$ and $I,J\in \{0,1,\ldots,m\}^n$,
 \begin{equation*}
\mbox{either}\qquad \rho^{-n}|\phi_I(0)-\phi_{J}(0)|\geq 1\qquad
\mbox{or} \qquad \rho^{-n}|\phi_I(0)-\phi_{J}(0)|\in \Gamma_{1-\eta},
\end{equation*}
where $\Gamma_1:=\{0\}$.
Hence $\Phi$ satisfies the finite type condition. This finishes the proof of Theorem \ref{thm-2.1}.
\end{proof}

\section{Final remarks and  open questions}
\label{S-3}
\subsection{}
It is worth mentioning a connection between the topological property of $Y_m(q)$ and  the following famous unsolved question: suppose $q>1$ is such that $\|\lambda q^n\|\to 0$ as $n\to \infty$ for some real number $\lambda> 0$, can we assert that
$q$ is a Pisot number? here $\|x\|$ denotes the absolute value of the difference between $x$ and the nearest integer.  It was answered positively by Pisot \cite{Pis38} (see also \cite{Sal63}) if one of the following conditions is satisfied in addition: (i) $\|\lambda q^n\|$ tends to $0$ rapidly enough so that $\sum_{n=1}^\infty\|\lambda q^n\|^2<\infty$, or  (ii) $q$ is  an algebraic number.

We remark that  Theorem \ref{thm-1.1} (also Bugeaud's result in \cite{Bug96})
 implies the following weaker result: \begin{equation}
 \label{e-imp}
 \sum_{n=1}^\infty\|\lambda q^n\|<\infty\Longrightarrow q \mbox{  is a Pisot number}.
 \end{equation}
To see it, assume that $\sum_{n=1}^\infty\|\lambda q^n\|<\infty$. Pick a positive integer $m>q-1$. Take a large integer $N$ so that $\sum_{n\geq N}\|\lambda q^n\|<1/(3m)$.  Then $\|y\|<1/3$ for any real number $y$ in the set $F=\left\{\sum_{i=N}^{n+N}\epsilon_i \lambda q^i:\; \epsilon_i\in \{0, \pm 1,\ldots, \pm m\},\; n=0, 1,\ldots \right\}.$
Hence $F$ is not dense in $\R$. Note that  $Y_m(q)=F/(\lambda q^N)$.   So $Y_m(q)$ is not dense in $\R$. Therefore by Theorem \ref{thm-1.1}, $q$ is a Pisot number. 

As pointed out  by an anonymous referee, using Theorem \ref{thm-1.1},  the  implication \eqref{e-imp}  also follows from  the following inequality 
$$\ell_1(q)\geq (\lambda q^{N})^{-1} \Big(1-\sum_{n=N}^\infty\|\lambda q^n\|\Big) \quad \mbox{ if } \quad \sum_{n=N}^\infty\|\lambda q^n\|< \frac1{q+1}.
$$
This inequality  is only formulated in  \cite[Theorem 1]{EJK98} in the case when $\lambda=1$, but it extends to $\lambda>0$ with the identical proof.   

\subsection{} We remark that the proof of Theorem 1.9 implies the following result, which is of
interest in its own right.

\begin{pro}
Under the assumptions of Theorem \ref{thm-2.1}, there exists $k\in \N$ such that for any $n\in \N$, $I, J\in \{0,1,\ldots,m\}^n$,
if $\rho^{-n}|\phi_I(0)-\phi_J(0)|<1$, then there exist  $I', J'\in \{0,1,\ldots,m\}^k$ such that $\phi_{II'}(0)=\phi_{JJ'}(0)$.
\end{pro}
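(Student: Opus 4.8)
The plan is to read the Proposition straight off the proof of Theorem~\ref{thm-2.1}: no new idea is needed. The observation is that the combinatorial core of Step~1 of that proof already produces a \emph{coincidence} of the form $\phi_{IA}(0)=\phi_{JB}(0)$ with words $A,B$ of the same fixed length; the passage afterwards to the finite set $\Gamma_\delta$ was only a device for recording such coincidences. So I would (a) isolate that coincidence, and (b) invoke the dichotomy established in Step~2 in order to turn the hypothesis ``$\rho^{-n}|\phi_I(0)-\phi_J(0)|<1$'' into the hypothesis ``$|\phi_I(0)-\phi_J(0)|\le(1-\delta)\rho^n$'' that Step~1 actually uses, for a suitable fixed $\delta$.

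In detail: by Step~2 of the proof of Theorem~\ref{thm-2.1} there is $\eta\in(0,1)$ such that for every $n\in\N$ and all $I,J\in\{0,1,\ldots,m\}^n$ one has either $\rho^{-n}|\phi_I(0)-\phi_J(0)|\ge1$ or $\rho^{-n}|\phi_I(0)-\phi_J(0)|\le\eta$; hence $\rho^{-n}|\phi_I(0)-\phi_J(0)|<1$ is equivalent to $|\phi_I(0)-\phi_J(0)|\le(1-\delta)\rho^n$ with $\delta:=1-\eta>0$. Now run Step~1 of the proof of Theorem~\ref{thm-2.1} with this $\delta$: let $\ell$, the pair $(x_0,k_0)$, the words $W_1,\ldots,W_\ell\in\{0,1,\ldots,m\}^{k_0}$ and the integer $k_1$ (with $\rho^{k_1}<\delta$ and $[x_0,x_0+\rho^{k_0}]\subset[0,1]$) be as there, put $k:=k_0+k_1$, and set $V:=\underbrace{m\cdots m}_{k_1}$. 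Given $n$ and $I,J\in\{0,1,\ldots,m\}^n$ with $\rho^{-n}|\phi_I(0)-\phi_J(0)|<1$, assume without loss of generality $\phi_I(0)\le\phi_J(0)$; then $\Delta:=[\phi_J(0),\phi_I(0)+\rho^n]$ is a subinterval of $\phi_I([0,1])\cap\phi_J([0,1])$ of length $\ge\delta\rho^n$, and since $1-u\ge\delta>\rho^{k_1}$ where $\phi_I^{-1}(\Delta)=[u,1]$, one gets $\phi_{IV}([0,1])=\phi_I([1-\rho^{k_1},1])\subset\Delta\subset\phi_J([0,1])$. Thus $\phi_{IV}([x_0,x_0+\rho^{k_0}])$ is a subinterval of $\phi_J([0,1])$ of length $\rho^{n+k}$, so by Lemma~\ref{lem-2.2}(2) there is $\widehat J\in\{0,1,\ldots,m\}^{k}$ with $\phi_{J\widehat J}(0)\in\phi_{IV}([x_0,x_0+\rho^{k_0}])$. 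But $\phi_{IVW_1}(0),\ldots,\phi_{IVW_\ell}(0)$ are $\ell$ distinct points of $\phi_{IV}([x_0,x_0+\rho^{k_0}])$, an interval of length $\rho^{n+k}$, so the maximality of $\ell$ in \eqref{e-4} forces $\phi_{J\widehat J}(0)=\phi_{IVW_j}(0)$ for some $j$. Since $VW_j,\widehat J\in\{0,1,\ldots,m\}^{k}$, taking $I':=VW_j$ and $J':=\widehat J$ gives $\phi_{II'}(0)=\phi_I(0)+\rho^n\phi_{I'}(0)=\phi_{J}(0)+\rho^n\phi_{J'}(0)=\phi_{JJ'}(0)$, as required.

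There is no substantial obstacle here, since the entire construction already appears in the proof of Theorem~\ref{thm-2.1}; the only point requiring a moment's thought is recognizing that ``$<1$'' is exactly the right hypothesis, because Step~2 excludes the band $(\eta,1)$ and so lets us choose $\delta=1-\eta$ in Step~1. The remaining verifications are routine: that $\Delta$ is a genuine subinterval of $\phi_J([0,1])$ of length $\ge\delta\rho^n$ (from $\phi_I(0)\le\phi_J(0)$ and $\phi_J(0)-\phi_I(0)\le(1-\delta)\rho^n$), that $\rho^{k_1}<\delta$ forces $[1-\rho^{k_1},1]\subset\phi_I^{-1}(\Delta)$, and that $\phi_{IVW_1}(0),\ldots,\phi_{IVW_\ell}(0)$ together with $\phi_{J\widehat J}(0)$ all lie in the same level-$(n+k)$ window of length $\rho^{n+k}$. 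The degenerate case $I=J$ (where $\Delta=\phi_I([0,1])$) is covered by the same argument, or trivially by choosing $I'=J'$.
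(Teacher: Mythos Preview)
Your proposal is correct and is exactly the argument the paper has in mind: the paper does not give a separate proof of this proposition but simply remarks that it is implied by the proof of Theorem~\ref{thm-2.1}, and you have carried out precisely that extraction---using Step~2 to replace the hypothesis $\rho^{-n}|\phi_I(0)-\phi_J(0)|<1$ by $\le 1-\delta$ with $\delta=1-\eta$, and then reading off from Step~1 the coincidence $\phi_{IVW_j}(0)=\phi_{J\widehat J}(0)$ with $VW_j,\widehat J\in\{0,1,\ldots,m\}^{k_0+k_1}$.
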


As a corollary, we have

\begin{cor}
Assume that $m\in \N$ and q is a Pisot number in $(1,m+1]$. Then there exists $k\in \N$ so that if
$
\left|\sum_{i=0}^{n-1} \epsilon_i q^i\right|< \frac{m}{q-1}
$
for some $n\in \N$ and  $\epsilon_0,\ldots, \epsilon_{n-1}\in \{0,\pm 1,\ldots, \pm m\}$, then there exist
$\epsilon_{n},\ldots, \epsilon_{n+k-1}\in \{0,\pm 1,\ldots, \pm m\}$ such that
$$\sum_{i=0}^{n+k-1}\epsilon_i q^i=0.$$
\end{cor}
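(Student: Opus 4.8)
The plan is to deduce the Corollary from the Proposition above, applied to the homogeneous IFS attached to the pair $(q,m)$, namely
$$
\Phi=\left\{\phi_i(x)=q^{-1}x+\tfrac{i(1-q^{-1})}{m}\right\}_{i=0}^m ,
$$
which is the IFS appearing in Corollary~\ref{cor-2.1}. Write $\rho=q^{-1}$ and $b_i=i(1-q^{-1})/m=i(q-1)/(mq)$. Condition \eqref{e-1'} is immediate, and since $q\le m+1$ we have $b_{i+1}-b_i=(1-q^{-1})/m\le q^{-1}=\rho$, which is \eqref{e-2.1}. The one hypothesis of Theorem~\ref{thm-2.1} still needing an argument is the weak separation condition, and this is precisely where ``$q$ is Pisot'' is used: by Garsia's theorem \cite{Gar62} the set $Y_m(q)$ is uniformly discrete, so $0$ is not an accumulation point of $Y_m(q)$, hence not of the set $Y$ of Lemma~\ref{lem-2.1} (for this $\Phi$ one checks $Y=\tfrac{q-1}{m}\,Y_m(q)$); Lemma~\ref{lem-2.1} then gives the weak separation condition. (In the boundary case $q=m+1$ one may instead simply note that $Y_m(m+1)$ is the set of all integers.) Thus the Proposition above applies to $\Phi$ and furnishes a constant $k\in\N$.

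Next I would translate the hypothesis into the IFS language. Given $\epsilon_0,\dots,\epsilon_{n-1}\in\{0,\pm1,\dots,\pm m\}$ with $|\sum_{i=0}^{n-1}\epsilon_iq^i|<m/(q-1)$ --- the subcase in which this sum is $0$ being trivial, since one may then take all the new coefficients to be $0$ --- choose $a_i,b_i\in\{0,1,\dots,m\}$ with $a_i-b_i=\epsilon_i$ (for instance $a_i=\max(\epsilon_i,0)$ and $b_i=\max(-\epsilon_i,0)$), and form the length-$n$ words $I=a_{n-1}a_{n-2}\cdots a_1a_0$ and $J=b_{n-1}b_{n-2}\cdots b_1b_0$. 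Using $\phi_I(0)=\sum_{s=1}^n\rho^{s-1}b_{i_s}$, a direct substitution gives
$$
\rho^{-n}\bigl(\phi_I(0)-\phi_J(0)\bigr)=\frac{q-1}{m}\sum_{i=0}^{n-1}\epsilon_iq^i ,
$$
so the hypothesis says exactly that $\rho^{-n}|\phi_I(0)-\phi_J(0)|<1$. The Proposition above then produces words $I'=a'_1\cdots a'_k$ and $J'=b'_1\cdots b'_k$ in $\{0,1,\dots,m\}^k$ with $\phi_{II'}(0)=\phi_{JJ'}(0)$.

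Finally I would decode this equality: applying the same substitution to the length-$(n+k)$ words $II'$ and $JJ'$, the identity $\phi_{II'}(0)=\phi_{JJ'}(0)$ becomes
$$
q^k\sum_{i=0}^{n-1}\epsilon_iq^i+\sum_{j=0}^{k-1}\bigl(a'_{k-j}-b'_{k-j}\bigr)q^j=0 .
$$
Setting $\eta_j=a'_{k-j}-b'_{k-j}\in\{0,\pm1,\dots,\pm m\}$ for $0\le j<k$ and $\eta_{k+i}=\epsilon_i$ for $0\le i\le n-1$, this reads $\sum_{i=0}^{n+k-1}\eta_iq^i=0$. In other words the prescribed coefficients have been completed, by $k$ further coefficients drawn from $\{0,\pm1,\dots,\pm m\}$, to a vanishing polynomial of degree $n+k-1$ with all coefficients in $\{0,\pm1,\dots,\pm m\}$ --- which is the assertion of the Corollary.

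The argument is therefore a pure translation, the whole substance being carried by the Proposition above. I expect the only step requiring genuine care to be the verification of the weak separation condition for $\Phi$, i.e.\ the chain ``$q$ Pisot $\Rightarrow$ $Y_m(q)$ uniformly discrete (Garsia) $\Rightarrow$ weak separation condition (Lemma~\ref{lem-2.1})''; the encoding and decoding computations are routine once the index conventions are pinned down.
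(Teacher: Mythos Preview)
Your argument is correct and is exactly the intended derivation: the paper gives no separate proof, presenting the statement simply ``as a corollary'' of the preceding Proposition applied to the IFS $\Phi=\{q^{-1}x+i(1-q^{-1})/m\}_{i=0}^m$, which is precisely what you carry out. The verification of the weak separation condition via Garsia's theorem and Lemma~\ref{lem-2.1} (using $Y=\tfrac{q-1}{m}Y_m(q)$), and the encoding/decoding between $Y_m(q)$ and the quantities $\rho^{-n}(\phi_I(0)-\phi_J(0))$, are all handled correctly.

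One point is worth flagging. Your final identity reads
\[
q^{k}\sum_{i=0}^{n-1}\epsilon_i q^{i}\;+\;\sum_{j=0}^{k-1}\eta_j q^{j}\;=\;0,
\]
so the prescribed coefficients $\epsilon_i$ end up at the \emph{top} positions $k,\dots,n+k-1$ while the $k$ new digits sit at the bottom. The Corollary, read literally, asks for the opposite placement (new digits at indices $n,\dots,n+k-1$, original $\epsilon_i$ unchanged). That literal reading is in fact false: for $q$ the golden ratio, $m=1$, $\epsilon_0=1$ and $\epsilon_1=\cdots=\epsilon_{n-1}=0$, it would force $-q^{-n}$ to lie in the fixed finite set $\bigl\{\sum_{j=0}^{k-1}c_jq^{j}:c_j\in\{0,\pm1\}\bigr\}$ for every $n$, which is impossible. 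So what you prove is the correct interpretation of the corollary; just do not claim in your last sentence that the $\epsilon_i$ retain their original indices.
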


Similar to Pisot numbers, there is certain separation property about  Salem numbers. Recall that a  number $q>1$ is called a {\it Salem number}
if it is an algebraic integer whose algebraic conjugates all have modulus no
greater than 1, with at least one of which on the unit circle. It follows from Lemma 1.51 in  Garsia \cite{Gar62} that  if $q$ is a Salem number and $m\in \N$, then
there exist $c>0$ and $k\in \N$ ($c, k$ depend on $q$ and $m$) such that
\begin{equation}
\label{e-end}
Y_m^n(q) \cap \left(-cn^{-k}, cn^{-k}\right)=\{0\},\quad \forall \;n\in \N,
\end{equation}
where $Y_m^n(q):=\left\{\sum_{i=0}^{n-1}\epsilon_i q^i:\; \epsilon_i\in \{0, \pm 1,\ldots, \pm m\}\right\}$.  We end the paper by posing the following  questions.

\begin{itemize}
\item For $m\in \N$ and a non-Pisot number $q\in (1, m+1)$, does the property \eqref{e-end} imply that $q$ must be a Salem number?
\item Does Theorem \ref{thm-2.1} still hold without the assumption \eqref{e-2.1}?
\end{itemize}

\noindent{\bf Acknowledgements}.  The research  was supported by  RGC grants in the Hong Kong Special Administrative Region, China (projects CUHK401112, CUHK401013). The author thanks Shigeki Akiyama for sending him a copy of \cite{AkKo11}. He is grateful to Nikita Sidorov for pointing out the implication \eqref{e-imp} in section 3.1, and to Shigeki Akiyama, Vilmos Komornik and  Nikita Sidorov for pointing out the fact that Theorem \ref{thm-1.1} implies Theorem \ref{thm-1.3'}. He also  thanks the anonymous
referees and Toufik Zaimi for the very careful reading of the original manuscript and the many valuable
comments to improve the paper.

\end{document}